\DeclareMathOperator{\C}{C}
\DeclareMathOperator{\R}{R}
\DeclareMathOperator{\Sp}{Sp}
\DeclareMathOperator{\G}{G}
\DeclareMathOperator{\M}{M} \DeclareMathOperator{\Mor}{Mor}
\DeclareMathOperator{\B}{B} \DeclareMathOperator{\Rep}{Rep}
 \DeclareMathOperator{\Aut}{Aut}
\DeclareMathOperator{\W}{L}
\DeclareMathOperator{\X}{X}
\DeclareMathOperator{\Ind}{Ind}
\DeclareMathOperator*{\tens}{\otimes}
\newtheorem{thm}{Theorem}[section]
\newtheorem{lem}[thm]{Lemma}
\newtheorem{prp}[thm]{Proposition}
\theoremstyle{definition}
\newtheorem{dfn}[thm]{Definition}
\newtheorem{rem}[thm]{Remark}
\newtheorem{ntn}[thm]{Notation}
\newcommand{\cml}{\mathcal{K}(\W^2(\mbG))}
\newcommand{\ev}{{\rm ev}}
\newcommand{\mbG}{\mathbb{G}}
\newcommand{\mbX}{\mathbb{X}}
\newcommand{\hg}{\hat\gamma}
\newcommand{\HG}{\Hat\Gamma}
\newcommand{\mbD}{\mathbb{D}}
\newcommand{\mbB}{\mathbb{B}}
\newcommand{\tp}{\xymatrix{*+<.7ex>[o][F-]{\scriptstyle\top}}}
\begin{document}
\subjclass{Primary 46L89, Secondary 58B32, 22D25}
\title[]{On a certain approach to quantum homogeneous spaces}
\author{P.~Kasprzak}
\address{Department of Mathematical Methods in Physics, Faculty of Physics, Warsaw University, Warsaw, Poland}
\thanks{Supported by the Marie Curie Research Training Network Non-Commutative Geometry MRTN-CT-2006-031962}
\thanks{Supported by Geometry and Symmetry of Quantum Spaces,
PIRSES-GA-2008-230836}
\email{pawel.kasprzak@fuw.edu.pl}
\subjclass[2000]{Primary 46L89, Secondary 20N99}
\begin{abstract} 
We propose a definition of a quantum homogeneous space of a locally compact quantum group. We show that classically it reduces to the notion of a homogeneous spaces. On the quantum level our definition goes beyond the quotient case. It provides a framework  which, besides the Vaes' quotient of a locally compact quantum group by its closed quantum subgroup (our main motivation) is also compatible  with, generically non-quotient, quantum homogeneous spaces of a compact
quantum group  studied
by P. Podle\'s  as well as the Rieffel deformation of
G-homogeneous
spaces. Finally, our definition rules out the paradoxical examples of the non-compact quantum homogeneous spaces of a compact quantum group. 
\end{abstract}
 \maketitle
\begin{section}{Introduction}
Although the theory of locally compact quantum groups is already well established (see \cite{KV}, \cite{MNW}), the quantum counterpart of the quantum homogeneous space is not yet known. One aspect of quantum homogeneous spaces  which is not thoroughly understood is related with the Vaes' construction  \cite{Vaes}  of the quotient of a locally compact quantum group group by its closed quantum subgroup: in order to prove the existence of the quantum quotient, the regularity of a given quantum group is needed. The second difficulty  is related with the non-quotient type of a generic quantum homogeneous space which is  expected since the construction of Podle\'s spheres in \cite{Pod}. 

In the case of compact quantum groups the situation considerably simplifies. It easy to realize that the quantum counterpart of a transitive group action  is provided by the concept of ergodic coaction (see  \cite{BF}, \cite{Pod}). Adopting this view point, the harmonic analysis on the homogeneous spaces may be extended to the quantum  setting. A quantum counterpart  of a given classical result may happen to be not totally  straightforward which is exemplified  by the following surprising fact:  the multiplicity of an irreducible subrepresentation entering a quantum homogeneous space, though always finite, may exceed the classical dimension of this representation - for the explicit examples we refer to \cite{VaesErg}. In order to formulate the quantum version of the discussed inequality the quantum dimension was introduced in \cite{BF}. In spite of some subtleties, the theory of quantum homogeneous spaces of compact quantum groups is  satisfactory. 

In this paper we propose a definition of a quantum homogeneous space (QHS) which goes beyond the compact and the quotient case. Our approach is motivated by the Vaes' construction of a quantum quotient of a locally compact quantum group by its closed quantum subgroup and is based on the interplay between the $\C^*$-algebraic and the von Neumann version of a QHS. Classically, our definition  reduces to the notion of homogeneous spaces.  Our definition is compatible with different classes of examples of quantum homogeneous spaces such as: quantum homogeneous spaces of the quotient type due to S. Vaes \cite{Vaes}, quantum homogeneous spaces of a compact quantum group  studied by P. Podle\'s \cite{Pod1} and the Rieffel deformation of
G-homogeneous spaces - see \cite{pkrdhs}. On the other hand we are able to rule out the paradoxical examples of non-compact quantum homogeneous spaces of a compact quantum group provided by S.L. Woronowicz \cite{woreq2}. 

Although the above advantages, the proposed theory of QHS is  not yet satisfactory. It is expected that in the non-regular quotient case there should exist  examples which do not fit into it, not mentioning that dropping  the regularity assumption one does not even know how to construct the quantum quotient. Some of the experts suggested  to base the construction of the quotient-type QHS on the integration along the quantum subgroup (private communication by S.L. Woronowicz, R. Meyer). The mathematical tools for this construction already exist (see \cite{RM}, \cite{Rf3}) but the theory of QHS which uses it is not formulated. 

Let us describe the contents of the paper. In Section \ref{prem} we recall the basic definitions and  fix a notation concerning locally compact quantum groups. In particular we specify the category $\mathcal{C}(\mbG)$ of $\mbG$-$\C^*$-algebras. In Section \ref{secqhs} the definition of a quantum homogeneous space is formulated. As was already mentioned our approach is based on the interplay between the $\C^*$- and the von Neumann algebraic version of a given QHS. We prove that they  uniquely determine each other. In Section \ref{minsec} we show that the $\C^*$-algebraic version of a QHS is a $\mbG$-simple object in $\mathcal{C}(\mbG)$. This implication is a quantum counterpart of the classically trivial fact that the homogeneity of a space is a stronger property than the density of each orbit. It may be surprising  that in order to prove the quantum version of this implication one has to use the Tomita-Takesaki theory employed in the construction of the canonical implementation of a coaction (see \cite{Vaesci}). In Section \ref{seccl} we prove that there is a 1-1 correspondence between the  homogeneous $\G$-spaces   and quantum homogeneous $\mbG$-spaces  with the underlying von Neumann algebra being commutative. In Section \ref{secqhsp} we show that our notion of a QHS when restricted to compact quantum groups boils down to the standard notion of a quantum homogeneous space introduced by P. Podle\'s \cite{Pod1}.  Section \ref{secrd} contains the proof of the fact that the Rieffel deformation of a homogeneous space is a quantum homogeneous space. It is a generalization of the result of \cite{pkrdhs} which deals with the case when the deformed QHS is of the quotient type.  

Throughout the paper we freely use the $\C^*$-algebraic concepts  such as multipliers $\M(A)$ of a given $\C^*$-algebra $A$ and morphisms $\Mor(A,B)$ from $A$ to a $\C^*$-algebra $B$. Our main reference for the $\C^*$-algebraic notions is \cite{W4}. For the theory of von Neumann algebras we refer to one of the standard textbooks (e.g.  \cite{sun}). We often use the {\it leg numbering notation}. Consider a tensor triple of Hilbert spaces $\mathcal{H}_1\otimes\mathcal{H}_2\otimes\mathcal{H}_3$. For $X\in\B(\mathcal{H}_1\otimes\mathcal{H}_2)$ we define $X_{12}=X\otimes 1$. We analogously define $Y_{23}$ and $Z_{13}$. The leg numbering notation passes naturally to the context of elements of the triple tensor products of $\C^*$- or von Neumann algebras. 

I would like to express my gratitude to A. Skalski for reading a preliminary version of this paper and giving the hints that helped to improve it. 
\end{section}
\begin{section}{Preliminaries}\label{prem}
\begin{dfn}
Let $\mathrm{G}$ be a locally compact group and $\X$ be a locally compact space. A left action of $\mathrm{G}$ on $\X$ is a continuous map $\mathrm{G}\times \X\rightarrow \X$ denoted by $(g,x)\mapsto gx$ satisfying
\begin{itemize}
\item[(i)] for all $g\in \mathrm{G}$, the map $\X\ni x\mapsto gx\in \X$ is a homeomorphism of $\X$, 
\item[(ii)] for all $g_1,g_2\in \mathrm{G}$ and $x\in \X$, $(g_1g_2)x = g_1(g_2x)$.
\end{itemize} A locally compact space $\X$ equipped with an action of $\mathrm{G}$ is called a $\mathrm{G}$-space.
A $\mathrm{G}$-space is said to be transitive   if for each pair $x_1,x_2\in \X$ there exists $g\in \mathrm{G}$ such that $gx_1=x_2$. 
\end{dfn}

Let $\mathrm{G}$ be a locally compact group and let $\mathrm{H}$ be a closed subgroup of $\mathrm{G}$. The quotient $\mathrm{G}/\mathrm{H}$ with the standard left action of $\mathrm{G}$ is an example of a transitive $\mathrm{G}$-space. 

Let $\X$ be a transitive  $\G$-space. A point $x\in \X$ is assigned with its isotropy group: $\mathrm{H}_x=\{g\in \G:gx=x\}$. The group $\mathrm{H}_x\subset \G$ is closed and we have a bijection $\Phi_x: \mathrm{G}/\mathrm{H}_x\in g\mathrm{H}_x\mapsto gx\in \X$. This  map is a continuous bijection which intertwines the actions of $\mathrm{G}$. It is easy to see that  if $\Phi_x$ is a homeomorphism for one point $x_0\in \X$ then  $\Phi_x$ is a homeomorphism for all $x\in \X$. 
\begin{dfn} We say that a transitive $\mathrm{G}$-space $\X$  is homogeneous (or $\X$ is a homogeneous $\mathrm{G}$-space) if the map $\Phi_x$ is homeomorphism.
\end{dfn}
As was shown in \cite{Lindblad} if $\mathrm{G}$ is $\sigma$-compact then all transitive $\mathrm{G}$-spaces are homogeneous. In order to see an example of a transitive  $\mathrm{G}$-space  which is not homogeneous let us consider the group $\mathbb{R}_d$ with the discrete topology. The real line $\mathbb{R}$ with the action 
\[\mathbb{R}_d\times\mathbb{R}\ni(x_1,x_2)\mapsto x_1+x_2\in\mathbb{R}\] is a transitive $\mathbb{R}_d$-space. Taking $x = 0$, we get  $\Phi_0:\mathbb{R}_d\rightarrow\mathbb{R}$,   
$\Phi_0(x) = x$ which is not homeomorphism. Note that $\mathbb{R}_d$ in this example is not $\sigma$-compact. 

In order to define quantum homogeneous spaces we shall first recall basic definitions and facts  concerning locally compact quantum groups (LCQG). Our main reference for this subject is \cite{KV} and \cite{kusvaes}. This section to some extents follows Section 2 of \cite{Vaes}. Let us emphasize that our definition of a quantum homogeneous space allows a non-quotient type examples.

Since a definition of LCQGs is based on the existence of Haar weights let us first recall some weight theoretic concepts. Let $\phi$ be a normal, semi-finite, faithful (n.s.f.) weight on a von Neumann algebra $M$. Then we shall denote:
\[\mathcal{M}_\phi^+=\{x\in M^+|\,\phi(x)<\infty\}\mbox{ and }\mathcal{N}_\phi=\{x\in M|\,\phi(x^*x)<\infty\}.\]
\begin{dfn}\label{lcqg}
A pair $\mbG=(M,\Delta)$ is called a locally compact quantum group  when
\begin{itemize}
\item $M$ is a von Neumann algebra and $\Delta:M\rightarrow M\bar\tens M$ is a normal and unital $*$-homomorphism satisfying the coassociativity relation: $(\Delta\otimes\iota)\circ\Delta=(\iota\otimes\Delta)\circ\Delta$;
\item there exist n.s.f. weights $\phi$ and $\psi$ on $M$ such that 
\begin{itemize}
\item[-] $\phi$ is left invariant: $\phi((\omega\otimes\iota)\Delta(x))=\phi(x)\omega(1)$ for all $x\in\mathcal{M}_\phi^+$ and $\omega\in M_*^+$,
\item[-] $\psi$ is right invariant: $\psi((\iota\otimes\omega)\Delta(x))=\psi(x)\omega(1)$ for all $x\in\mathcal{M}_\psi^+$ and $\omega\in M_*^+$.
\end{itemize}
\end{itemize}
\end{dfn}
In the first step of the development of the theory one moves on to the  Hilbert space level. Let  ($\W^2(\mbG)$,\,$\pi$,\,$\Lambda_\phi$) be the  GNS-triple corresponding to the weight $\phi$, where $\W^2(\mbG)$ is the completion of $\mathcal{N}_{\phi}$ in the norm $||x||^2_{\phi}=\phi(x^*x)$,  $\pi:M\rightarrow\B(\W^2(\mbG))$ is the GNS-representation and $\Lambda_\phi:\mathcal{N}_\phi\rightarrow\W^2(\mbG)$ is the GNS-map. The  multiplicative unitary operator   $W\in\B(\W^2(\mbG)\otimes\W^2(\mbG))$, also referred as the left regular corepresentation, is defined by the formula: \[W^*(\Lambda_\phi(a)\otimes\Lambda_\phi(b))=(\Lambda_\phi\otimes\Lambda_\phi)(\Delta(b)(a\otimes 1))\mbox{ for any } a,b\in\mathcal{N}_\phi.\] One proves that the von Neumann algebra $M$  may be recovered as the  strong closure of the algebra $\{(\iota\otimes\omega)(W)\,|\,\omega\in\B(\W^2(\mbG))_*\}$. In turn, the comultiplication $\Delta$ is implemented by $W$   \[\Delta(x)=W^*(1\otimes x)W \mbox{ for any } x\in M.\] The operator $W$ encodes the pair $(M,\Delta)$. 

One of the trademarks of the  LCQG theory is a well established duality  - a broad generalization of the Pontryagin duality for locally compact abelian groups. Starting with $\mbG$ one  constructs the dual quantum group $\widehat\mbG=(\Hat{M},\Hat\Delta)$. The duality, on the level of the multiplicative unitaries  is manifested as follows: the multiplicative unitary related with $\widehat\mbG$ is given by $\hat{W}=\Sigma W^* \Sigma$ where $\Sigma:\W^2(\mbG)\otimes\W^2(\mbG)\rightarrow \W^2(\mbG)\otimes\W^2(\mbG)$ is the flip operator. The von Neumann algebra $\Hat{M}$ is the strong closure of the algebra $\{(\iota\otimes\omega)(\Hat W)\,|\,\omega\in\B(\W^2(\mbG))_*\}$ and the comultiplication $\Hat{\Delta}$ is implemented by $\Hat{W}$: $\Hat\Delta(x)=\Hat{W}^*(1\otimes x)\Hat{W}$ for all $x\in\Hat{M}$. It is a nontrivial fact that $\widehat\mbG$ is a LCQG in the sense of Definition \ref{lcqg}, i.e. there exist the canonically defined left and right invariant weights on $\Hat{M}$ denoted by  $\hat\phi$ and $\hat\psi$ respectively. Let us write $J$ and $\hat{J}$ for the modular conjugations of the weights $\phi$ and $\hat\phi$ respectively. Using them one introduces the right regular corepresentations $V\in\Hat{M}'\otimes M$ and $\Hat{V}\in M'\otimes \Hat{M}$:
\begin{equation}\label{rrr}V=(\Hat{J}\otimes\Hat{J})\Hat{W}(\Hat{J}\otimes\Hat{J}),\,\,\,\,\Hat{V}=(J\otimes J)W(J\otimes J).\end{equation}

It turns out that $\mbG=(M,\Delta)$, which is an object of the von Neumann type, encodes also the $\C^*$-algebraic version of $\mbG$. To be more precise,  the $\C^*$-algebraic counterpart $(A,\Delta)$ of $\mbG$   may be recovered from $(M,\Delta)$ by means of the multiplicative unitary $W$. The $\C^*$-algebra $A$ coincides with the norm closure of the algebra
$\{(\iota\otimes\omega)(W)\,|\,\omega\in\B(\W^2(\mbG))_*\}$ and $\Delta$ restricts to a  morphism $\Delta\in\Mor(A, A\otimes A)$. When there is no danger of confusion, we shall also write  $\mbG=(A,\Delta)$.

Let us move on to the coactions of quantum groups on von Neumann algebras.
\begin{dfn}\label{vncoac}
Let $\mbG=(M,\Delta)$ be a locally compact quantum group, $N$ a von Neumann algebra and let $\alpha:N\rightarrow M\bar\tens N$ be a normal, injective,  unital $*$-homomorphism. We say that $\alpha$ is a left coaction of $\mbG$ on $N$  when $(\iota\otimes\alpha)\circ\alpha=(\Delta\otimes\iota)\circ\alpha$. 
We define the algebra of coinvariants $N^{\alpha}$:
\[N^{\alpha}=\{x\in N|\,\alpha(x)=1\otimes x\}.\] We say that $\alpha$ is ergodic when $N^{\alpha}=\mathbb{C}1$. 
\end{dfn}
Let $\alpha:N\rightarrow M\bar\tens N$ be a coaction of $\mbG$,  $\theta:N^+\rightarrow[0,\infty]$  a n.s.f. weight and let $K$ be the Hilbert space of the GNS-construction based on $\theta$. The antilinear Tomita-Takesaki conjugation will be denoted by $J_\theta:K\rightarrow K$. Combining  Proposition 3.7, Proposition 3.12  and Theorem 4.4 of \cite{Vaesci} one may prove the quantum counterpart of the Haagerup's theorem on the existence of the canonical unitary implementation of an action of a locally compact group on a von Neumann algebra.
\begin{thm}\label{unimpth}
Let us adopt the notation introduced above. There exists a unitary operator $U\in M\bar\otimes\B(K)$ such that 
\begin{itemize}
\item $(\Delta\otimes\iota)U=U_{23}U_{13}$;
\item $\alpha(x)=U(1\otimes x)U^*$;
\item $(\hat{J}\otimes J_\theta)U(\hat{J}\otimes J_\theta)=U^*$.
\end{itemize}
\end{thm}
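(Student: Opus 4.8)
The statement is the quantum analogue of Haagerup's theorem on the canonical (standard) unitary implementation of a group action, and the plan is to run the quantum-group version of Haagerup's construction, as carried out by S.~Vaes in \cite{Vaesci}. The key auxiliary object is the crossed product von Neumann algebra $\mbG\ltimes N$, acting on $\W^2(\mbG)\otimes K$ and generated by $\alpha(N)$ together with $\Hat{M}\otimes 1$ (depending on the normalisation of $W$ one may instead have to use $\Hat{M}'\otimes 1$). It carries a canonical dual n.s.f.\ weight $\widetilde\theta$, obtained from the left Haar weight $\hat\phi$ of $\widehat\mbG$ and from $\theta$ by means of the operator-valued weight attached to the coaction $\alpha$; the GNS Hilbert space of $\widetilde\theta$ is canonically $\W^2(\mbG)\otimes K$, with a GNS map $\Lambda_{\widetilde\theta}$ that on a suitable core extends the algebraic tensor product $\Lambda_{\hat\phi}\otimes\Lambda_\theta$. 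On $\W^2(\mbG)\otimes K$ there are then two antiunitary involutions --- the modular conjugation $J_{\widetilde\theta}$ and the product conjugation $\hat{J}\otimes J_\theta$ --- and one sets
\[U:=J_{\widetilde\theta}\,(\hat{J}\otimes J_\theta)\]
(possibly after a flip and/or after passing to the adjoint, according to conventions). As a product of two antiunitary involutions of $\W^2(\mbG)\otimes K$, the operator $U$ is unitary, and since $(\hat{J}\otimes J_\theta)^2=1$ one gets $(\hat{J}\otimes J_\theta)\,U\,(\hat{J}\otimes J_\theta)=(\hat{J}\otimes J_\theta)J_{\widetilde\theta}=U^*$, which is the third assertion. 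This is exactly where the Tomita--Takesaki machinery enters.

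It then remains to verify the first two assertions, which is bookkeeping with GNS maps. That $U$ implements $\alpha$, i.e.\ $\alpha(x)=U(1\otimes x)U^*$ for $x\in N$, follows from the way $\alpha(N)$ sits inside $\mbG\ltimes N$ together with the fact that $J_{\widetilde\theta}$ carries $\mbG\ltimes N$ onto its commutant while acting on the $N$-leg as the Tomita conjugation $J_\theta$ of $N$; slicing the resulting operator identity by normal functionals on $\B(\W^2(\mbG))$ gives the displayed formula, and --- using that $M$ and $\Hat{M}$ are each other's commutants on $\W^2(\mbG)$ --- also shows that $U$ actually lies in $M\bar\otimes\B(K)$, not merely in $\B(\W^2(\mbG))\bar\otimes\B(K)$. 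The corepresentation identity $(\Delta\otimes\iota)U=U_{23}U_{13}$ is obtained by applying $\iota\otimes\alpha$ to $\mbG\ltimes N$ and combining the iterated-coaction relation $(\iota\otimes\alpha)\circ\alpha=(\Delta\otimes\iota)\circ\alpha$ with the pentagon equation for $W$; indeed, in the special case $N=M$, $\alpha=\Delta$, $\theta=\phi$ the whole theorem reduces to familiar facts about the multiplicative unitary $W$, the canonical implementation of $\Delta$ being there $U=W^*$. Finally, the three conditions together characterise $U$ uniquely: any two unitaries implementing $\alpha$ and satisfying the corepresentation identity differ by a $\Delta$-cocycle unitary, which the conjugation relation forces to be $1$; this is why $U$ is the \emph{canonical} implementation.

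The main obstacle is the first paragraph, namely the correct construction of the dual weight $\widetilde\theta$ and the identification of its GNS data and modular objects. Concretely one has to show that the natural prescription for $\Lambda_{\widetilde\theta}$ on the linear span of the elements $\alpha(x)(\hat a\otimes 1)$, with $x\in\mathcal{N}_\theta$ and $\hat a$ ranging over a core for $\Lambda_{\hat\phi}$, closes up to the GNS map of a genuine n.s.f.\ weight, and then one must identify its modular automorphism group, typically through Connes cocycle derivatives such as $[D\widetilde\theta:D(\hat\phi\otimes\theta)]_t$. There is no elementary isometry/density shortcut here: unlike in the construction of the multiplicative unitary, the naive candidate $\Lambda_\phi(a)\otimes\Lambda_\theta(x)\mapsto(\Lambda_\phi\otimes\Lambda_\theta)(\alpha(x)(a\otimes 1))$ is not isometric as soon as $\theta$ fails to be $\alpha$-invariant, which is precisely why one is forced to pass through the crossed product and its dual weight. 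Granting that construction, the three displayed identities follow by a routine, if lengthy, manipulation of GNS maps, the commutation theorem and the pentagon equation --- essentially the combination of Proposition 3.7, Proposition 3.12 and Theorem 4.4 of \cite{Vaesci}.
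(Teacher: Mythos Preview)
Your proposal is correct and aligns with the paper's treatment: the paper does not give its own proof of this theorem but simply states it as a preliminary fact, explicitly attributing it to the combination of Proposition~3.7, Proposition~3.12 and Theorem~4.4 of \cite{Vaesci}. Your sketch accurately outlines the crossed-product/dual-weight construction underlying those results and ends by citing the very same three references, so there is nothing to add.
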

In order to describe the coactions of $\mbG$ on the $\C^*$-algebras we shall introduce the category $\mathcal{C}(\mbG)$ of $\mbG$-$\C^*$-algebras - a quantum counterpart of the category $\mathcal{C}(\G)$ of $\G$-$\C^*$-algebras (see e.g. \cite{KQ}). We adopt the following notation: the closed linear span of a subset $\mathcal{W}\subset\mathcal{V}$ of a Banach space $\mathcal{V}$ will be denoted by $[\mathcal{W}]$. The  definition of $\mbG$-$\C^*$-algebras may be traced back to \cite{BF} and \cite{Pod1}.
\begin{dfn}\label{concoact} Let $\mbG$ be a LCQG. A $\mbG$-$\C^*$-algebra is a  pair $\mbD=(D,\Delta_D)$ consisting of a $\C^*$-algebra $D$ and an injective coaction $\Delta_D\in\Mor(D,A\otimes D)$:
\[(\iota\otimes\Delta_D)\circ \Delta_D = (\Delta\otimes\iota)\circ\Delta_D \] which is continuous: $[\Delta_D(D)(A\otimes 1)]=\C_0(\mbG)\otimes D$. The $\C^*$-algebra $D$ will also be denoted by $\C_0(\mbD)$ and the coaction $\Delta_D$ will be denoted by $\Delta_{\mbD}$.
\end{dfn}
In accordance with the above definition we shall use the notation $\C_0(\mbG)$ and $\Delta_{\mbG}$. We shall also denote $M$ by $\W^\infty(\mbG)$.
In order to specify the morphism in the category of $\mbG$-$\C^*$-algebra we adopt the following definition.
\begin{dfn}\label{Gmor}
Let $\mbG$ be a LCQG and suppose that $\mathbb{B}$ and $\mbD$ are $\mbG$-$\C^*$-algebras. We say that a morphism $\pi\in\Mor(\C_0(\mbB),\C_0(\mbD))$ is a $\mbG$-morphism if $\Delta_{\mbD}\circ\pi=(\iota\otimes\pi)\circ\Delta_{\mbB}$. The set of $\mbG$-morphism from $\mathbb B$ to $\mbD$  will be denoted by $\Mor_{\mbG}(\mathbb{B},\mbD)$.
\end{dfn}

Let $\mbD$ be a $\mbG$-$\C^*$-algebra. One may extend the classical crossed product construction to this case defining the reduced crossed product $\widehat\mbG_{\rm cop}$-$\C^*$-algebra $\mbG\ltimes\C_0(\mbD)$, where $\widehat\mbG_{\rm cop}=(\Hat\M,\Sigma\Hat\Delta(\cdot)\Sigma)$. For the details of this construction we refer to \cite{Vaes}. Let us only recall the definition of the  $\C^*$-algebra $\mbG\ltimes\C_0(\mbD)$:
\begin{equation}\label{cacp}
\mbG\ltimes\C_0(\mbD)=[\Delta_{\mbD}(\C_0(\mbD))(\C_0(\widehat\mbG)\otimes 1)].
\end{equation}

In the course of the paper we shall represent the von Neumann algebras on the $\C^*$-Hilbert modules - the reference for the subject is the standard textbook  \cite{EL}. Let us remind  that for any $\C^*$-Hilbert module $\mathcal{E}$ we may define the $\C^*$-algebra of adjointable operators: \[\mathcal{L}(\mathcal{E})=\{T:\mathcal{E}\rightarrow\mathcal{E}\,|\,\exists\,\, T^*:\mathcal{E}\rightarrow\mathcal{E} \mbox{ s.t. }(e_1|Te_2)=(T^*e_1|e_2) \mbox{ for any }e_1,e_2\in\mathcal{E}\}.\] Let $\mathcal{K}(\mathcal{E})\subset\mathcal{L}(\mathcal{E})$ be the ideal of compact operators. We shall often identify $\mathcal{L}(\mathcal{E})$ with $\M(\mathcal{K}(\mathcal{E}))$. 
The following definition is due to S. Vaes - see Definition 3.1 \cite{Vaes}.
\begin{dfn} Let $N$ be a von Neumann algebra and $\mathcal{E}$ a $\C^*$-Hilbert module. A unital $*$-homomorphism $\pi:N\rightarrow\mathcal{L}(\mathcal{E})$ is said to be strict if it is $*$-strongly continuous on the unit ball of $N$. 
\end{dfn}
The strictness of a $*$-homomorphism $\pi:N\rightarrow\mathcal{L}(\mathcal{E})$ means that for any $*$-strongly convergent, uniformly bounded net  $x_i\in N$ and any $v\in\mathcal{E}$, the nets $\pi(x_i)v$ and $\pi(x_i^*)v$ are norm convergent in $\mathcal{E}$. 
\begin{rem}\label{2id}
Let $N$ be a von Neumann algebra and $p\in N$ a central projection in $N$. We may define  a weakly closed $2$-sided ideal generated by $p$: $I=pN$. This  establishes a 1-1 correspondence between the central projections in $N$ and  its $2$-sided ideals.
\end{rem}
\end{section}
\begin{section}{Quantum homogeneous spaces - definition and the uniqueness results}\label{secqhs} 
Let $\mbG$ be a locally compact quantum group.
Our definition of a quantum homogeneous space is motivated by the Vaes' construction of the quotient of a locally compact quantum group by a closed quantum subgroup and is based  on the interplay between the $\C^*$-algebraic and the von Neumann version of it. Let us first formulate the definition leaving the explanation of  motivation  for later.
\begin{dfn}\label{qhs2} Let $N$ be a von Neumann algebra and let $\Delta_N:N\rightarrow \W^\infty(\mbG)\bar\tens N$ be an ergodic coaction of a locally compact quantum group $\mbG$. We say that $(N,\Delta_N)$ is a quantum homogeneous space (QHS) if there exists a $\mbG$ -$\C^*$-algebra $\mbD$ such that:
\begin{itemize}
\item $\C_0(\mbD)$ is a strongly dense $\C^*$-subalgebra of $N$;
\item $\Delta_{\mbD}$ is given by the restriction of $\Delta_N$ to $\C_0(\mbD)$;
\item $\Delta_N(N)\subset\M(\cml\otimes\C_0(\mbD))$ and the map $\Delta_N:N\rightarrow \mathcal{L}(\W^2(\mbG)\otimes \C_0(\mbD))$ is strict.
\end{itemize} 
\end{dfn}
\begin{rem}\label{weakcond}
It turns out that the form of Definition \ref{qhs2} is not optimal. Indeed, one may replace the last condition by an apparently weaker condition:
\begin{itemize}
\item For any uniformly bounded net $x_i\in N$ converging $*$-strongly to $x\in N$ and for any $y\in\cml\otimes\C_0(\mbD)$, the net $\Delta_N(x_i)y$ converges in the norm topology to $\Delta_N(x)y$.
\end{itemize} 
In order to see that the above condition implies the third condition of Definition \ref{qhs2} let us fix $x\in N$ and a uniformly bounded net $d_i\in\C_0(\mbD)$ that $*$-strongly converges to $x$. Then $\Delta_{\mbD}(d_i)\in\M(\C_0(\mbG)\otimes\C_0(\mbD))\subset\M(\cml\otimes\C_0(\mbD))$ and the above condition implies that $\Delta_{\mbD}(d_i)$ converges to $\Delta_N(x)$ in the strict topology of $\M(\cml\otimes D)$. In particular  $\Delta_N(x)\in\M(\cml\otimes\C_0(\mbD))$ and $\Delta_N$ gives rise to a strict map $\Delta_N:N\rightarrow\mathcal{L}(\W^2(\mbG)\otimes\C_0(\mbD))$.
\end{rem}
\begin{rem}
Let $\mbG_1$ be a closed quantum subgroup of $\mbG$ in the sense of Definition 2.5 of \cite{Vaes}. Using the results of Section 4 of \cite{Vaes} we may construct the von Neumann version of a  quantum quotient space $Q\subset \W^\infty(\mbG)$.
Definition \ref{qhs2} was motivated by Theorem 6.1 of \cite{Vaes} where it was shown that under the regularity  assumption imposed on $\mbG$ there exists a unique $\C^*$-algebraic version $\mbG/\mbG_0$ of the quantum quotient  satisfying the conditions of Definition \ref{qhs2} (for the notion of regularity we refer to \cite{BS}). The uniqueness of $\mbG/\mbG_0$ does not require the regularity of $\mbG$ which was already noted by S. Vaes in the course of his proof. This suggests that the uniqueness of $\mbD$ should hold in the context of Definition \ref{qhs2} which we shall prove in the next proposition. As we  show in \cite{pksolEQHS}, in the case of the quantum quotient by a compact quantum subgroup also the existence of $\mbG/\mbG_0$ does not require  the regularity of $\mbG$. For the $\C^*$-algebraic account of this construction we refer to \cite{SolQC}.
\end{rem}
\begin{prp}\label{emb1} Let $(N,\Delta_N)$ be a quantum homogeneous space in the sense of  Definition \ref{qhs2}. Then the $\mbG$-$\C^*$-algebra $\mbD$ is uniquely determined by the conditions of Definition \ref{qhs2}.
\end{prp}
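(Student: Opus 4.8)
The plan is to show that the $\C^*$-algebra $\C_0(\mbD)$, viewed as a strongly dense subalgebra of $N$, is reconstructed directly and canonically from the pair $(N,\Delta_N)$; once this is done, the coaction $\Delta_{\mbD}$ is forced to be the restriction of $\Delta_N$, and uniqueness of $\mbD$ follows. I would run the argument used by S.~Vaes in the uniqueness half of Theorem~6.1 of \cite{Vaes}, carried out for a general QHS rather than just for a quotient; as observed there, it never invokes regularity.

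First I would check that the reduced crossed product is an invariant of $(N,\Delta_N)$:
\[
\mbG\ltimes\C_0(\mbD)=[\Delta_{\mbD}(\C_0(\mbD))(\C_0(\widehat\mbG)\otimes 1)]=[\Delta_N(N)(\C_0(\widehat\mbG)\otimes 1)]=:\mathcal{A}.
\]
The inclusion of the left-hand side in $\mathcal{A}$ is immediate from $\C_0(\mbD)\subset N$ and the second condition of Definition~\ref{qhs2}. For the reverse inclusion, given $x\in N$ I would pick a uniformly bounded net $d_i\in\C_0(\mbD)$ converging $*$-strongly to $x$; by the third condition of Definition~\ref{qhs2} — equivalently Remark~\ref{weakcond}, applied to $d_i$ and to $d_i^*$ — the net $\Delta_{\mbD}(d_i)=\Delta_N(d_i)$ converges to $\Delta_N(x)$ strictly in $\M(\cml\otimes\C_0(\mbD))$, and combining this with a Cohen factorisation in $\C_0(\widehat\mbG)$ and the standard properties of the reduced crossed product from \cite{Vaes} (in particular that $\mbG\ltimes\C_0(\mbD)$ is a nondegenerate $\C^*$-subalgebra of $\M(\cml\otimes\C_0(\mbD))$) one obtains $\Delta_N(x)(c\otimes 1)\in\mathcal{A}$ for every $c\in\C_0(\widehat\mbG)$. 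Thus $\mathcal{A}$ does not depend on the choice of $\mbD$.

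Next I would recover $\C_0(\mbD)$ — more precisely its image $\Delta_{\mbD}(\C_0(\mbD))=\Delta_N(\C_0(\mbD))\subset\M(\mathcal{A})$ — from $\mathcal{A}$ together with its canonical dual coaction $\widehat\Delta$ of $\widehat\mbG_{\rm cop}$. This coaction is constructed from $\widehat{W}$ and hence depends only on $\mbG$ and on $\mathcal{A}$; on $\Delta_{\mbD}(\C_0(\mbD))$ it acts trivially, while $\C_0(\widehat\mbG)\otimes 1\subset\M(\mathcal{A})$. The crux — and this is exactly Vaes' argument — is a Landstad-type identification of $\Delta_{\mbD}(\C_0(\mbD))$ inside $\M(\mathcal{A})$, of the form
\[
\Delta_{\mbD}(\C_0(\mbD))=\bigl\{m\in\M(\mathcal{A})\ :\ \widehat\Delta(m)=1\otimes m,\ \ m(\C_0(\widehat\mbG)\otimes 1)\cup(\C_0(\widehat\mbG)\otimes 1)m\subset\mathcal{A}\bigr\}.
\]
Here the first condition locates where $\Delta_{\mbD}(\C_0(\mbD))$ can sit, while the second, together with the $\C^*$-continuity requirement $[\Delta_{\mbD}(\C_0(\mbD))(\C_0(\mbG)\otimes 1)]=\C_0(\mbG)\otimes\C_0(\mbD)$ of Definition~\ref{concoact}, both recovers all of $\C_0(\mbD)$ and discards spurious multipliers (such as $1$ in the non-unital case). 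Since the right-hand side is built from $(N,\Delta_N)$ alone, so is $\Delta_N(\C_0(\mbD))$; as $\Delta_N$ is injective this determines $\C_0(\mbD)$, and with it $\mbD=(\C_0(\mbD),\Delta_N|_{\C_0(\mbD)})$.

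I expect the main obstacle to be this second step, i.e. establishing the Landstad-type identity: one must show that the displayed algebra equals $\Delta_{\mbD}(\C_0(\mbD))$ \emph{exactly} — neither a proper subalgebra (this is where the $\C^*$-continuity condition of Definition~\ref{concoact}, and more generally the $\C^*$-level structure of $\mbD$ as opposed to merely the von Neumann algebra $N$, is genuinely needed) nor a larger set of multipliers. This is the delicate, regularity-independent part of Vaes' analysis. A secondary, purely technical difficulty — already visible in the first step — is the recurrent need to upgrade $\sigma$-weak convergence of approximating nets in $N$ to norm-level statements in the $\C^*$-algebraic picture, which is precisely what the strictness built into Definition~\ref{qhs2} makes possible.
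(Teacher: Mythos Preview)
Your plan is substantially more elaborate than what the paper does, and despite your framing it is \emph{not} the argument from Theorem~6.1 of \cite{Vaes}. The paper's proof --- which it explicitly identifies as a direct generalisation of Vaes' uniqueness argument --- is a short slicing computation comparing two candidates $\mbD_1,\mbD_2$ directly: using the continuity of $\Delta_{\mbD_1}$ and then the strictness of $\Delta_N$ twice (once to replace $\C_0(\mbD_1)$ by $N$, once to replace $N$ by $\C_0(\mbD_2)$), one gets
\[
\C_0(\mbD_1)=\bigl[(\omega\otimes\iota)\bigl(\Delta_N(N)(\cml\otimes\C_0(\mbD_1))\bigr)\,:\,\omega\in\B(\W^2(\mbG))_*\bigr]=[\C_0(\mbD_2)\C_0(\mbD_1)].
\]
Symmetry yields $\C_0(\mbD_2)=[\C_0(\mbD_1)\C_0(\mbD_2)]$, and taking adjoints gives $\C_0(\mbD_1)=\C_0(\mbD_2)$. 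No crossed products, no dual coaction, no Landstad-type characterisation.

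Your route --- recover the reduced crossed product as an invariant of $(N,\Delta_N)$, then extract $\Delta_{\mbD}(\C_0(\mbD))$ as a Landstad algebra inside it --- is conceptually appealing but is both overkill and incomplete as stated. You yourself flag the Landstad identity as the ``main obstacle'' and do not prove it; showing that the fixed-point set you write down is \emph{exactly} $\Delta_{\mbD}(\C_0(\mbD))$ (rather than a possibly larger set of multipliers) is a genuine piece of Landstad--Vaes duality, not a formality. There is also an unaddressed subtlety already in your first step: strict convergence of $\Delta_{\mbD}(d_i)$ to $\Delta_N(x)$ in $\M(\cml\otimes\C_0(\mbD))$ gives norm convergence of $\Delta_{\mbD}(d_i)y$ only for $y\in\cml\otimes\C_0(\mbD)$, whereas $c\otimes 1$ with $c\in\C_0(\widehat\mbG)$ lies merely in the multiplier algebra; your appeal to Cohen factorisation and nondegeneracy does not, without further argument, upgrade this to $\Delta_N(x)(c\otimes 1)\in\mbG\ltimes\C_0(\mbD)$. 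The paper sidesteps all of this by never leaving the level of slices against $\cml\otimes\C_0(\mbD_i)$.
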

\begin{proof} The below proof is a straightforward generalization of the Vaes' uniqueness argument - see  Theorem 6.1 of \cite{Vaes}.
Suppose that $\mbD_1$ and $\mbD_2$ satisfy the conditions of Definition \ref{qhs2}. Using the continuity of the coaction $\Delta_{\mbD_1}$ and 	the strictness of $\Delta_N$ we get
\begin{eqnarray*}\C_0(\mbD_1)&=&[\C_0(\mbD_1)\C_0(\mbD_1)]\\&=&[(\omega\otimes\iota)(\Delta_{\mbD_1}(\C_0(\mbD_1))(\cml\otimes \C_0(\mbD_1)))\,|\,\omega\in\B(\W^2(\mbG))_*]\\&=&[(\omega\otimes\iota)(\Delta_{N}(N)(\cml\otimes \C_0(\mbD_1)))\,|\,\omega\in\B(\W^2(\mbG))_*]
\\&=&[(\omega\otimes\iota)(\Delta_{N}(\C_0(\mbD_2))(\cml\otimes \C_0(\mbD_1)))\,|\,\omega\in\B(\W^2(\mbG))_*]=[\C_0(\mbD_2)\C_0(\mbD_1)].
\end{eqnarray*}
In the forth equality we used the strong density of $\C_0(\mbD_2)\subset N$ and again the strictness of $\Delta_N$. 
By symmetry we have $\C_0(\mbD_2)=[\C_0(\mbD_1)\C_0(\mbD_2)]$. Taking the adjoint we get $\C_0(\mbD_1)=\C_0(\mbD_2)$. 
\end{proof}
Let $(N,\Delta_N)$ be a QHS.
Our next aim is to prove that the $\C^*$-algebraic version $\mbD$ uniquely determines  $(N,\Delta_N)$. This justifies the following notation: $N=\W^\infty(\mbD)$, $\Delta_{N}=\Delta_{\W^\infty(\mbD)}$.
\begin{prp} Let $\mbG$ be a locally compact quantum group and let $(N_1,\Delta_{N_1})$ and $(N_2,\Delta_{N_2})$ be quantum homogeneous $\mbG$-spaces. Suppose that there exists a $\mbG$-isomorphism $\pi\in\Mor_{\mbG}(\mbD_1,\mbD_2)$. Then $\pi$  extends to a $\mbG$-isomorphism $\pi:N_1\rightarrow N_2$, $\Delta_{N_1}\circ\pi=(\iota\otimes\pi)\circ\Delta_{N_2}$. 
\end{prp}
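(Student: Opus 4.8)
The plan is to convert the $\C^*$-algebraic isomorphism $\pi$ into an isomorphism of the ambient multiplier algebras and then to recognise the two von Neumann algebras inside them. Since $\pi:\C_0(\mbD_1)\to\C_0(\mbD_2)$ is a $\C^*$-isomorphism, $\id_{\W^2(\mbG)}\otimes\pi$ is a unitary (over $\pi$) between the $\C^*$-Hilbert modules $\mathcal{E}_i=\W^2(\mbG)\otimes\C_0(\mbD_i)$, so conjugation by it yields a $\C^*$-isomorphism $\Phi:\mathcal{L}(\mathcal{E}_1)\to\mathcal{L}(\mathcal{E}_2)$, i.e.\ $\Phi:\M(\cml\otimes\C_0(\mbD_1))\to\M(\cml\otimes\C_0(\mbD_2))$, which carries $\cml\otimes\C_0(\mbD_1)$ onto $\cml\otimes\C_0(\mbD_2)$ and is therefore a homeomorphism for the strict topologies. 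On $\C_0(\mbG)\otimes 1$ the map $\Phi$ is the identity and on $1\otimes\C_0(\mbD_1)$ it is $1\otimes\pi(\cdot)$, so on $\M(\C_0(\mbG)\otimes\C_0(\mbD_1))$ it agrees with the multiplier extension of $\iota\otimes\pi$; combined with the $\mbG$-morphism property $\Delta_{\mbD_2}\circ\pi=(\iota\otimes\pi)\circ\Delta_{\mbD_1}$ this gives $\Phi\circ\Delta_{\mbD_1}=\Delta_{\mbD_2}\circ\pi$.

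The decisive step is to prove $\Phi(\Delta_{N_1}(N_1))=\Delta_{N_2}(N_2)$. I would represent each $N_i$ faithfully and non-degenerately on the GNS space $H_i$ of a faithful n.s.f.\ weight, so that $\cml\otimes\C_0(\mbD_i)$ acts non-degenerately on $\W^2(\mbG)\otimes H_i$; then both $\mathcal{L}(\mathcal{E}_i)$ and $\W^\infty(\mbG)\bar\tens N_i$ sit inside $\B(\W^2(\mbG)\otimes H_i)$, and on bounded subsets of $\mathcal{L}(\mathcal{E}_i)$ the strict topology is finer than the $\sigma$-weak topology inherited from $\B(\W^2(\mbG)\otimes H_i)$. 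Given $x\in N_1$, by Kaplansky density choose a bounded net $d_j\in\C_0(\mbD_1)$ with $d_j\to x$ $*$-strongly, so by strictness of $\Delta_{N_1}$ one has $\Delta_{\mbD_1}(d_j)=\Delta_{N_1}(d_j)\to\Delta_{N_1}(x)$ strictly, hence $\Delta_{N_2}(\pi(d_j))=\Phi(\Delta_{\mbD_1}(d_j))\to\Phi(\Delta_{N_1}(x))$ strictly, and therefore $\sigma$-weakly in $\B(\W^2(\mbG)\otimes H_2)$. Since $\{\pi(d_j)\}$ is bounded in $N_2$ it has a $\sigma$-weak cluster point $y\in N_2$; passing to a subnet and using normality of $\Delta_{N_2}$ gives $\Phi(\Delta_{N_1}(x))=\Delta_{N_2}(y)\in\Delta_{N_2}(N_2)$, and the reverse inclusion follows from the same argument applied to $\pi^{-1}$. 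Since $\Delta_{N_1},\Delta_{N_2}$ are injective normal $*$-homomorphisms with von Neumann algebra ranges, $\pi':=\Delta_{N_2}^{-1}\circ\Phi\circ\Delta_{N_1}:N_1\to N_2$ is then a well-defined bijective $*$-homomorphism, and $\Phi\circ\Delta_{\mbD_1}=\Delta_{\mbD_2}\circ\pi$ shows $\pi'|_{\C_0(\mbD_1)}=\pi$.

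To finish I would check normality and equivariance. For normality it suffices to verify $\pi'(x_j)\uparrow\pi'(x)$ for every bounded increasing net $x_j\uparrow x$ in $N_1^+$: such $x_j$ converge $*$-strongly to $x$ (their terms being self-adjoint), so the chain from the previous paragraph yields $\Delta_{N_2}(\pi'(x_j))\to\Delta_{N_2}(\pi'(x))$ $\sigma$-weakly in $\W^\infty(\mbG)\bar\tens N_2$, hence $\pi'(x_j)\to\pi'(x)$ $\sigma$-weakly in $N_2$; comparing with $\sup_j\pi'(x_j)\le\pi'(x)$ forces $\pi'(x_j)\uparrow\pi'(x)$, so $\pi'$ is normal, and a bijective normal $*$-homomorphism of von Neumann algebras is an isomorphism. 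Equivariance then follows because $\Delta_{N_2}\circ\pi'$ and $(\iota\otimes\pi')\circ\Delta_{N_1}$ are normal maps $N_1\to\W^\infty(\mbG)\bar\tens N_2$ which agree on the $\sigma$-weakly dense subalgebra $\C_0(\mbD_1)$, where both reduce to $\Delta_{\mbD_2}\circ\pi=(\iota\otimes\pi)\circ\Delta_{\mbD_1}$. The main obstacle is the decisive step: recognising each $N_i$, via $\Delta_{N_i}$, as the strict closure of $\Delta_{\mbD_i}(\C_0(\mbD_i))$ inside $\mathcal{L}(\W^2(\mbG)\otimes\C_0(\mbD_i))$, which is exactly the place where the strictness condition of Definition \ref{qhs2} (equivalently its weaker form in Remark \ref{weakcond}) together with Kaplansky density in $N_i$ is indispensable.
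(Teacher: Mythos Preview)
Your argument is correct and reaches the same conclusion, but the paper takes a shorter route by invoking the canonical unitary implementation of the coaction. After observing (as you do) that $(\iota\otimes\pi)\circ\Delta_{N_1}$ maps $N_1$ into $\M(\cml\otimes\C_0(\mbD_2))$ and that strictness of $\Delta_{N_1}$ and $\Delta_{N_2}$ forces $(\iota\otimes\pi)\circ\Delta_{N_1}(N_1)=\Delta_{N_2}(N_2)$, the paper simply writes down the extension via the unitary $U_{N_2}$ of Theorem~\ref{unimpth}: $1\otimes\pi(x)=U_{N_2}^*\big((\iota\otimes\pi)\Delta_{N_1}(x)\big)U_{N_2}$. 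This is exactly your $\pi'=\Delta_{N_2}^{-1}\circ\Phi\circ\Delta_{N_1}$, but expressing the inverse of $\Delta_{N_2}$ as conjugation by $U_{N_2}^*$ makes well-definedness, normality, and equivariance essentially automatic, so the paper omits the separate verifications you carry out. Your approach trades the appeal to Vaes' implementation theorem for a more hands-on argument (Kaplansky density, $\sigma$-weak cluster points, and the normality of $\Delta_{N_2}^{-1}$ on its range); this is more self-contained but noticeably longer, and your detailed justification of the image equality $(\iota\otimes\pi)\circ\Delta_{N_1}(N_1)=\Delta_{N_2}(N_2)$ fills in what the paper only asserts in one line.
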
  
\begin{proof}
Using the fact that $\Delta_{N_1}(N_1)\subset\M(\cml\otimes\C_0(\mbD_1))$ we may define an injective map $(\iota\otimes\pi)\circ\Delta_{N_1}:N_1\rightarrow\M(\cml\otimes\C_0(\mbD_2))$. The $\mbG$-covariance of $\pi$ and the strictness of $\Delta_{N_1}$ and $\Delta_{N_2}$ implies that $(\iota\otimes\pi)\circ\Delta_{N_1}(N_1)=\Delta_{N_2}(N_2)$. Let $U_{N_2}$ be the canonical implementation of $\Delta_{N_2}$ (see Theorem \ref{unimpth}). The extension $\pi:N_1\rightarrow N_2$ of $\pi\in\Mor_{\mbG}(\mbD_1,\mbD_2)$ is given by the following explicit formula: $1\otimes \pi(x)=U^*_{N_2}\big((\iota\otimes\pi)\Delta_{N_1}(x)\big)U_{N_2}$ for any $x\in N_1$. 
\end{proof}
\end{section}
\begin{section}{$\mbG$-simplicity}\label{minsec}
Let $\G$ be a locally compact group and let $\X$ be a $\G$-space. The action of $\G$ on $\X$ is said to be minimal if $\overline{\G x}=\X$ for any $x\in\X$. In other words, the action of $\G$ on $\X$ is minimal if there does not exist a non-trivial  $\G$-invariant closed subset of $\X$. This condition  may be translated into the non-existence of a non-trivial $\G$-invariant ideal in $\C_0(\X)$ which leads  to the following notion of a $\mbG$-simple $\C^*$-algebra (see Definition 2.4 \cite{pkrdhs}).
\begin{dfn}\label{mincoact} Let $\mbG$ be a locally compact quantum group and $\mbD$ a $\mbG$-$\C^*$-algebra. We say that $\mbD$ is $\mbG$-simple if for any $\mbG$-$\C^*$-algebra  $\mathbb{B}$ and any $\mbG$-morphism $\pi\in\Mor_{\mbG}(\mbD,\mbB)$ we have $\ker\pi=\{0\}$.  
\end{dfn} 
In the next theorem we shall provide the quantum counterpart of the  classically trivial implication: transitivity $\Rightarrow$ minimality. It may be surprising that the proof relies on the Tomita-Takesaki theory employed in the construction of the canonical implementation of the coaction. 
\begin{thm}\label{minim} Let $\mbG$ be a locally compact quantum group and let $(N,\Delta_N)$ be a QHS in the sense of Definition \ref{qhs2}. The $\C^*$-algebraic version $\mbD$ of $(N,\Delta_N)$ is $\mbG$-simple.
\end{thm}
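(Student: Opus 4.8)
The plan is to imitate, at the operator-algebra level, the classical implication ``transitive $\Rightarrow$ minimal''. Fix a $\mbG$-morphism $\pi\in\Mor_{\mbG}(\mbD,\mbB)$ with $\mbB\neq 0$. I would read $I:=\ker\pi$ as a $\mbG$-invariant closed two-sided ideal of $\C_0(\mbD)$, attach to it a central projection $p$ of $N$, prove that $p$ is $\Delta_N$-coinvariant, and then kill it by ergodicity. The only non-formal step, and the one I expect to be the real obstacle, is upgrading the a priori \emph{inequality} $\Delta_N(p)\le 1\otimes p$ to the \emph{equality} $\Delta_N(p)=1\otimes p$; this is precisely the place where the canonical implementation of Theorem \ref{unimpth}, hence the Tomita--Takesaki theory, is indispensable.

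First I would record the invariance of $I$ in the form $[\Delta_{\mbD}(I)(\C_0(\mbG)\otimes 1)]\subseteq\C_0(\mbG)\otimes I$: for $d\in I$ and $a\in\C_0(\mbG)$, continuity of $\Delta_{\mbD}$ gives $\Delta_{\mbD}(d)(a\otimes 1)\in\C_0(\mbG)\otimes\C_0(\mbD)$, while $(\iota\otimes\pi)\bigl(\Delta_{\mbD}(d)(a\otimes 1)\bigr)=\Delta_{\mbB}(\pi(d))(a\otimes 1)=0$; since $\C_0(\mbG)\otimes I$ is exactly the kernel of $\iota\otimes\pi$ restricted to $\C_0(\mbG)\otimes\C_0(\mbD)$, the claim follows. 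Next let $\bar I$ be the $\sigma$-weak closure of $I$ in $N$. Separate $\sigma$-weak continuity of multiplication together with the strong density of $\C_0(\mbD)$ in $N$ makes $\bar I$ a $\sigma$-weakly closed two-sided ideal of $N$, so by Remark \ref{2id} there is a central projection $p\in N$ with $\bar I=pN$. Writing $K$, $J_\theta$ for the GNS space and modular conjugation of the n.s.f.\ weight $\theta$ on $N$ from Theorem \ref{unimpth} and using that an approximate unit of $I$ tends strongly to $p$, one gets $\overline{IK}=pK$.

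The third step is $\Delta_N(p)\le 1\otimes p$. For $d\in I$ and an approximate unit $(e_\lambda)$ of $\C_0(\mbG)$, the element $\Delta_{\mbD}(d)(e_\lambda\otimes 1)$ lies in $\C_0(\mbG)\otimes I$ by the first step, hence maps $\W^2(\mbG)\otimes K$ into $\W^2(\mbG)\otimes pK$; since $\Delta_{\mbD}(d)(e_\lambda\otimes 1)\to\Delta_{\mbD}(d)$ strongly on $\W^2(\mbG)\otimes K$, the operator $\Delta_{\mbD}(d)$ itself maps into $\W^2(\mbG)\otimes pK$, i.e.\ $(1\otimes(1-p))\Delta_{\mbD}(d)=0$ for all $d\in I$. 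By Kaplansky density choose a bounded net $d_i\in I$ with $d_i\to p$ $*$-strongly; strictness of $\Delta_N$ gives $\Delta_N(d_i)\to\Delta_N(p)$ strongly on $\W^2(\mbG)\otimes K$, and passing to the limit yields $(1\otimes(1-p))\Delta_N(p)=0$.

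Now the crux. Let $U\in\W^\infty(\mbG)\bar\otimes\B(K)$ be the canonical implementation of $\Delta_N$, so that $\Delta_N(p)=U(1\otimes p)U^{*}$ and $(\hat J\otimes J_\theta)U(\hat J\otimes J_\theta)=U^{*}$. Since $p$ is central in $N$ and $N$ is in standard form on $K$, the standard-form identity gives $J_\theta pJ_\theta=p$. The inequality just obtained reads $(1\otimes(1-p))U(1\otimes p)=0$, i.e.\ $U$ leaves $\W^2(\mbG)\otimes pK$ invariant; applying to it the multiplicative conjugation $X\mapsto(\hat J\otimes J_\theta)X(\hat J\otimes J_\theta)$, which carries $U$ to $U^{*}$ and fixes $1\otimes p$, gives $(1\otimes(1-p))U^{*}(1\otimes p)=0$, so $U^{*}$ leaves $\W^2(\mbG)\otimes pK$ invariant as well. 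As $U$ is unitary it then commutes with $1\otimes p$, whence $\Delta_N(p)=U(1\otimes p)U^{*}=1\otimes p$. By ergodicity $p\in N^{\Delta_N}=\mathbb{C}1$, so $p\in\{0,1\}$. If $p=0$ then $\bar I=0$ and $\ker\pi=\{0\}$. If $p=1$ then $I$ is $\sigma$-weakly dense in $N$: for each $d\in\C_0(\mbD)$ pick (Kaplansky) a bounded net $d_i\in I$ with $d_i\to d$ $*$-strongly; strictness of $\Delta_N$ and strict continuity of $\iota\otimes\pi$ on bounded sets then force $\Delta_{\mbB}(\pi(d))=\lim_i(\iota\otimes\pi)\Delta_{\mbD}(d_i)=0$, hence $\pi(d)=0$ by injectivity of $\Delta_{\mbB}$, so $\pi=0$ and $\mbB=0$ --- excluded. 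Thus $p=1$ cannot occur and $\ker\pi=\{0\}$, i.e.\ $\mbD$ is $\mbG$-simple. The only step I do not see how to make purely formal is the passage $\Delta_N(p)\le 1\otimes p\Rightarrow\Delta_N(p)=1\otimes p$: conjugating an inequality by a unitary does not produce equality, and it is the Tomita--Takesaki symmetry $(\hat J\otimes J_\theta)U(\hat J\otimes J_\theta)=U^{*}$ together with $J_\theta pJ_\theta=p$ that supplies the missing reverse inclusion. I would also double-check the tensor-exactness fact used in the first step and the convention in Definition \ref{mincoact} that $\mbB\ne 0$.
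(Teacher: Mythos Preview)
Your argument follows essentially the same route as the paper's proof: pass to the weak closure of $\ker\pi$, attach the central projection $p$, obtain $\Delta_N(p)\le 1\otimes p$, upgrade to equality via the canonical implementation $U$ and the identity $(\hat J\otimes J_\theta)U(\hat J\otimes J_\theta)=U^*$ together with $J_\theta pJ_\theta=p$, and then eliminate $p=1$ by the strictness condition. Your invariant-subspace phrasing of the $J$-step is equivalent to the paper's inequality manipulation.

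The one point worth flagging is the one you already flag: in your first step you need that the kernel of $\iota\otimes\pi$ on $\C_0(\mbG)\otimes\C_0(\mbD)$ is exactly $\C_0(\mbG)\otimes I$, i.e.\ exactness of the minimal tensor product with $\C_0(\mbG)$. For a general locally compact quantum group $\C_0(\mbG)$ need not be exact, so this step is not automatic. The paper sidesteps the issue by working throughout with $\mathcal K(\W^2(\mbG))$ in place of $\C_0(\mbG)$: since $\Delta_N(N)\subset\M(\mathcal K(\W^2(\mbG))\otimes\C_0(\mbD))$ by the third bullet of Definition~\ref{qhs2}, and $\mathcal K(\W^2(\mbG))$ is nuclear, the required short exact sequence survives tensoring. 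Replacing $\C_0(\mbG)$ by $\mathcal K(\W^2(\mbG))$ in your first and third steps makes your argument go through verbatim; your elimination of $p=1$ (concluding $\pi=0$, hence $\mbB=0$) is then a mild rephrasing of the paper's conclusion $I_0=\C_0(\mbD)$, both contradicting non-degeneracy of $\pi$.
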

\begin{proof}
Let $\pi\in\Mor_{\mbG}(\mbD,\mbB)$ and let $I_0$ denote the kernel of $\pi$. Let $I\subset N$ be the strong closure of $I_0$. The strictness of $\Delta_N:N\rightarrow\mathcal{L}(\W^2(\mbG)\otimes \C_0(\mbD))$ implies that for any $x\in I$ and any $y\in\cml\otimes\C_0(\mbD)$ we have $(\iota\otimes\pi)[\Delta(x)y]=0$. Using the nuclearity of $\cml$ we see that $\Delta(x)y\in\cml\otimes I_0$. Replacing $y$ by the elements $e_i$ of  approximate unit for $\cml\otimes I_0$ and taking the limit we see that \begin{equation}\label{inI}\Delta(x)\in B(\W^2(\mbG))\bar\tens I.\end{equation} Let $p\in N$ be the central projection in $N$ corresponding to $I$, $I=pN$ (see Remark \ref{2id}). Note that \eqref{inI} implies the following inequality \begin{equation}\label{obeq}\Delta_N(p)\leq 1\otimes p.\end{equation} In order to prove the opposite inequality let us use the canonical implementation of $\Delta_N$. Adopting the notation of Theorem \ref{unimpth} we express  \eqref{obeq} by  \[U(1\otimes p)U^*\leq 1\otimes p.\] This  easily implies that $(1\otimes p)\leq U^*(1\otimes p)U$. Conjugating both sides with $\hat{J}\otimes J_\theta$ and using the centrality of $p$ we get  $(1\otimes p)\leq U(1\otimes p)U^*=\Delta_N(p)$ which together with \eqref{obeq} gives rise to the equality  
\begin{equation}\label{invp}\Delta_N(p)=1\otimes p.\end{equation} 
Using the ergodicity of $\Delta_N$ we get $p=0$ or $p=1$.
In the case $p=0$ we have $\ker\pi=\{0\}$ which proves the $\mbG$-simplicity of $\mbD$. 

The case $p=1$  can be translated into the strong density of  $I_0$ inside $N$. Repeating the argument from the first paragraph of the proof we may see that
\begin{equation}\label{i0c}[\Delta_{\mbD}(I_0)(\cml\otimes\C_0(\mbD))]\subset\cml\otimes I_0.\end{equation} Let us fix an element $d\in\C_0(\mbD)$ and $y\in\cml\otimes\C_0(\mbD)$. There exists a uniformly bounded net $x_i\in I_0$ that $*$-strongly converges to $d$. Using the strictness condition for $\Delta_N$ and inclusion \eqref{i0c} we get the norm convergence
\[\Delta_{\mbD}(d)y=\lim_i\Delta_{\mbD}(x_i)y.\] This together with  \eqref{i0c} shows that 
\[\cml\otimes\C_0(\mbD)=[\Delta_{\mbD}(\C_0(\mbD))(\cml\otimes\C_0(\mbD))]\subset\cml\otimes I_0\] and immediately gives $\C_0(\mbD)\subset I_0$. Since the opposite inclusion is clear we get $\ker\pi=\C_0(\mbD)$ which contradicts the non-degenerateness of $\pi$ and rules out the case $p=1$.
\end{proof}
\end{section}

\begin{section}{Classical case}\label{seccl}
Let $\mathrm{G}$ be a locally compact group and $\mbG$ the corresponding   locally compact quantum group: $M=\W^\infty(\mathrm{G})$ and $\Delta:\W^\infty(\mathrm{G})\rightarrow\W^\infty(\mathrm{G})\bar\otimes\W^\infty(\mathrm{G})$ is the standard comultiplication.
Let $\mathrm{X}$ be a homogeneous $\mathrm{G}$-space. The action of $\mathrm{G}$ on $\mathrm{X}$ gives rise to the coaction of $\mbG$ on $\W^\infty(\mathrm{X})$: $\Delta_{\mathrm{X}}:\W^\infty(\mathrm{X})\rightarrow\W^\infty(\mathrm{G})\bar\tens\W^\infty(\mathrm{X})$, $\Delta_{\mathrm{X}}(f)(g,x)=f(gx)$ for any $g\in\mathrm{G}$, $x\in\mathrm{X}$ and $f\in\W^\infty(\mathrm{X})$. It turns out that $(\W^\infty(\mathrm{X}),\Delta_{\mathrm{X}})$ is a QHS in the sense of Definition \ref{qhs2} with the $\C^*$-algebraic version $\mbX=(\C_0(\mathrm{X}),\Delta_{\mathrm{X}})$. The fact that $\mbX$ satisfies all the conditions of Definition \ref{qhs2} may be concluded from Theorem 6.1 \cite{Vaes}. Our aim is to prove the opposite implication which together with the above provides  a 1-1 correspondence between the quantum homogeneous $\mbG$-spaces with the underlying commutative von Neumann algebra $N$ on one side and homogeneous $\mathrm{G}$-spaces on the other. 
\begin{thm}\label{clehs} Let $\mathrm{G}$ be a locally compact group and $\mbG$ the corresponding LCQG. Let  $(N,\Delta_N)$ be a quantum homogeneous space in the sense of Definition \ref{qhs2} with $N$ being commutative. Then the spectrum $\mathrm{X}=\Sp(\mbD)$ is a homogeneous $\mathrm{G}$-space and $(N,\Delta_N)=(\W^\infty(\mathrm{X}),\Delta_{\mathrm{X}})$.
\end{thm}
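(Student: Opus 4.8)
The plan is to show first that $N$ commutative with an ergodic coaction of $\mbG=\W^\infty(\G)$ forces $N=\W^\infty(\X)$ for a standard Borel $\G$-space $\X$, and then to use the $\C^*$-algebraic data $\mbD$ together with the strictness hypothesis to upgrade this to transitivity of the $\G$-action. The starting point is classical: a coaction $\Delta_N\colon N\to\W^\infty(\G)\bar\otimes N$ of a commutative (group von Neumann) quantum group on a commutative von Neumann algebra $N$ is precisely a (point) action of $\G$ on the measure space underlying $N$; writing $N=L^\infty(\X,\mu)$ we get a Borel action $\G\times\X\to\X$, $(g,x)\mapsto gx$, with $\Delta_N(f)(g,x)=f(gx)$, and ergodicity of $\Delta_N$ becomes: every $\G$-invariant measurable subset of $\X$ is null or conull. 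The remaining task is to promote measure-theoretic ergodicity to topological transitivity, which is where the $\C^*$-level object $\mbD$ enters.

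Next I would identify $\C_0(\mbD)$ as a concrete commutative $\C^*$-algebra. Since $\C_0(\mbD)$ is a strongly dense $\C^*$-subalgebra of the commutative von Neumann algebra $N$, its spectrum $\X=\Sp(\mbD)$ is a locally compact Hausdorff space with $\C_0(\mbD)=\C_0(\X)$, and the inclusion $\C_0(\X)\hookrightarrow N$ exhibits $N=\W^\infty(\X)$ for some measure class on $\X$ with full support (by strong density). The coaction restricts to $\Delta_{\mbD}\in\Mor(\C_0(\X),\C_0(\G)\otimes\C_0(\X))$ which is a continuous coaction in the sense of Definition \ref{concoact}: the continuity condition $[\Delta_{\mbD}(\C_0(\X))(\C_0(\G)\otimes 1)]=\C_0(\G)\otimes\C_0(\X)$ is exactly the statement that the action map $\G\times\X\to\X$ is continuous (and here open, by a Baire-category argument using the Podleś condition). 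Thus at the topological level we already have a genuine continuous $\G$-action on the locally compact space $\X$.

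The heart of the argument is then: \emph{the continuous $\G$-action on $\X$ is transitive}, i.e. minimal with, in addition, a single orbit. Minimality — density of every orbit — follows from Theorem \ref{minim}: $\mbD=(\C_0(\X),\Delta_{\mbD})$ is $\mbG$-simple, and in the commutative setting a $\G$-invariant closed subset $F\subsetneq\X$ would give the $\mbG$-morphism $\C_0(\X)\to\C_0(F)$ (restriction) with nonzero kernel $\C_0(\X\setminus F)$, contradicting $\mbG$-simplicity; hence $\overline{\G x}=\X$ for every $x$. To get an \emph{honest} single orbit rather than merely dense orbits, I would combine minimality with the ergodicity of the measure: the saturation of a point, $\G x$, is an orbit; by minimality it is dense, and by a standard descriptive-set-theoretic argument (orbits of a continuous action of a second-countable locally compact group are Borel, indeed locally closed when the orbit space is not too pathological) a dense orbit of positive... — more robustly, one shows any single orbit $\G x$ is a $\G$-invariant Borel set, hence by ergodicity either null or conull; if it is conull for some $x$ then, being an orbit hence homogeneous, one checks it must be all of $\X$ using that $\C_0(\X)\subset N$ and the support is full, so the complement (a proper closed invariant set carrying no mass but the action is continuous) is empty. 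This yields that $\X$ is a homogeneous $\G$-space; then $N=\W^\infty(\X)$ and $\Delta_N=\Delta_{\X}$ by the uniqueness in Proposition \ref{emb1} together with the first computation.

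The main obstacle I anticipate is precisely the last step — passing from a dense orbit (topological minimality, which comes for free from $\mbG$-simplicity) to a single orbit equal to all of $\X$. Minimality alone does not imply transitivity for general locally compact group actions (irrational rotation flows on tori are minimal but not transitive), so one genuinely needs to exploit the interplay between the $\C^*$-data and the von Neumann data: it is the coexistence of an ergodic \emph{von Neumann} coaction on $\W^\infty(\X)$ \emph{and} a continuous minimal action on $\X$, linked by the strictness/compatibility conditions of Definition \ref{qhs2}, that must force transitivity. I would expect the rigorous route to use the Mackey–Glimm dichotomy (a Polish $\G$-action is either smooth, with locally closed orbits, or admits an ergodic non-transitive invariant measure of a specific "turbulent" type), together with the fact that here we have an invariant measure of full support and minimality, to conclude the action is smooth and hence, being minimal, transitive; the bookkeeping to fit our hypotheses into that dichotomy, and to handle the non-necessarily-second-countable case by reducing to a separable predual, is the delicate part.
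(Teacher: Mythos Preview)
Your proposal has a genuine gap exactly where you flag it: the passage from minimality to transitivity. You correctly note that minimality plus measure-theoretic ergodicity is insufficient (the irrational rotation of $\mathbb{Z}$ on $\mathbb{T}$ with Lebesgue measure is minimal and ergodic but every orbit is null), and you correctly suspect that the strictness condition of Definition~\ref{qhs2} must be the missing ingredient. But you never actually invoke it. The Mackey--Glimm dichotomy does not close the gap on its own: to conclude smoothness you would need to rule out the non-smooth alternative, and nothing in ``minimal $+$ ergodic quasi-invariant measure class of full support'' does that --- again the irrational rotation is a non-smooth action satisfying all of this. So as written the argument stalls precisely at the point you identify, with no mechanism offered.

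The paper's route is quite different and uses strictness twice, concretely. Fix any $\tau\in\X$ and form the $\mbG$-morphism $\pi_{\mbD}=(\iota\otimes\ev_\tau)\circ\Delta_{\mbD}\colon\C_0(\X)\to\C_0(\G)$; this is injective by Theorem~\ref{minim}. The first use of strictness is to extend $\pi_{\mbD}$ to a normal $*$-homomorphism $\pi_N=(\iota\otimes\ev_\tau)\circ\Delta_N\colon N\to\W^\infty(\G)$, which makes sense because $\Delta_N(N)\subset\M(\cml\otimes\C_0(\X))$; one then shows $\pi_N$ is injective by the same central-projection argument as in Theorem~\ref{minim}. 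Dually, $\pi_{\mbD}$ corresponds to the orbit map $\pi_\X\colon\G\to\X$, $g\mapsto g\tau$, which has dense image. The second use of strictness proves $\pi_\X$ is \emph{surjective}: if some $y\notin\pi_\X(\G)$, take a compact neighbourhood $\mathcal{U}$ of $e$ and characteristic functions $\chi_{\mathcal{O}_i}$ of shrinking open neighbourhoods of $\mathcal{U}y\subset\X\setminus\pi_\X(\G)$. Their pullbacks $x_i=\chi_{\mathcal{O}_i}\circ\pi_\X\in N\subset\W^\infty(\G)$ converge strongly to $0$ (since $\mathcal{U}y$ is disjoint from $\pi_\X(\G)$), so strictness forces $(\iota\otimes\ev_y)\bigl(\Delta_N(x_i)(k\otimes d)\bigr)\to 0$ in norm for any $k\in\cml$, $d\in\C_0(\X)$. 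But computing directly, $(\iota\otimes\ev_y)\bigl(\Delta_N(x_i)(k_{\mathcal{U}}\otimes d)\bigr)=x_i(\cdot\,y)\,k_{\mathcal{U}}\,d(y)$ converges to $k_{\mathcal{U}}\neq 0$ when $d(y)=1$ and $k_{\mathcal{U}}$ is the projection onto $\chi_{\mathcal{U}}$, a contradiction. This is where the extra rigidity beyond minimality and ergodicity is extracted, and it is exactly what your sketch is missing.
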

\begin{proof}
Let us fix a point $\tau\in\X$ and let $\ev_\tau:\C_0(\mbD)\rightarrow\mathbb{C}$ be the corresponding character. Let us consider the morphism $\pi_{\mbD}\in\Mor_{\mbG}(\mbD,\mbG)$ given by the formula $\pi_{\mbD}(d)=(\iota\otimes\ev_\tau)\Delta_{\mbD}(d)$ for any $d\in\C_0(\mbD)$. Using the $\mbG$-simplicity of  $\mbD$ (see Theorem \ref{minim}) we see that $\pi_{\mbD}$ is injective.

In what follows we shall show that $\pi_{\mbD}$ may be extended to an injective normal $*$-homomorphism of $\pi_{N}:N\rightarrow\W^\infty(\mbG)$. The strictness of $\Delta_N$ gives $\Delta_N(N)\subset\M(\cml\otimes\C_0(\mbD))$, which enables us to define $\pi_N(x)=(\iota\otimes\ev_\tau)\Delta_N(x)$ for any $x\in N$. Let $I=\ker\pi_N$ and let $p\in N$ be the central projection generating $I$: $I=pN$. Using the equality  
\begin{equation}\label{cover}(\iota\otimes\pi_N)\circ\Delta_N=\Delta_{\mbG}\circ\pi_N\end{equation} we may see that $\Delta_N(I)\subset \W^\infty(\mbG)\bar\tens I$. In particular $\Delta_N(p)\leq 1\otimes p$. Proceeding as in the proof of Theorem \ref{minim} we get the equality $\Delta_N(p)=1\otimes p$. The ergodicity of $\Delta_N$ implies that either $p=0$ or $p=1$. The case $p=1$ is ruled out by the injectivity of $\pi_{\mbD}$ hence $p=0$ and $\pi_N$ is injective. Identifying $N$ with its image in $\W^\infty(\mbG)$ under  $\pi_N$ end using \eqref{cover} we get   $\Delta_N = \Delta_{\mbG}|_N$, thus $N\subset \W^\infty(\mbG)$ is a coideal:
$\Delta_\mbG(N)\subset\W^\infty(\mbG)\bar\otimes N$. Using Theorem 2 \cite{TT} we get a closed subgroup $\mathrm{H}\subset \mathrm{G}$ such that $N=\W^\infty(\mathrm{G}/\mathrm{H})$. Thus Theorem 6.1 \cite{Vaes} enables us to identify $\Sp(\mbD)$ with $\mathrm{G}/\mathrm{H}$. 
\end{proof}
\end{section}
\begin{section}{Quantum homogeneous spaces of compact quantum groups}\label{secqhsp}
The aim of this section is to show that our definition of a  quantum homogeneous space is compatible with the the definition of quantum homogeneous space of a compact quantum group introduced by P. Podle\'s \cite{Pod1}. Let us first recall the notion of a compact quantum group (see \cite{Worcqg}).
\begin{dfn}\label{cqg}
A compact quantum group $\mbG=(A,\Delta)$ consists of a unital $\C^*$-algebra $A$ together with a unital $*$-homomorphism $\Delta:A\rightarrow A\otimes A$ satisfying the coassociativity relation \[(\Delta\otimes\iota)\circ\Delta=(\iota\otimes\Delta)\circ\Delta\] and the cancellation properties
\[[\Delta(A)(A\otimes 1)]=A\otimes A=[\Delta(A)(1\otimes A)].\]
\end{dfn}
If $\mbG$ is a compact quantum group then there exists a unique Haar state $\phi\in A^*$ on it:
\[(\phi\otimes\iota)\Delta(a)=\phi(a)1=(\iota\otimes\phi)\Delta(a).\]
In what follows we shall assume that $\phi$ is faithful. 
\begin{dfn} Let $\mathcal{H}$ be a Hilbert space. A unitary corepresentation $u$ of $\mbG$ on $\mathcal{H}$ is a unitary element of $\M(\C_0(\mbG)\otimes\mathcal{K}(\mathcal{H}))$ such that $(\Delta\otimes\iota)u=u_{13}u_{23}$. The dimension of the underlying Hilbert space $\mathcal{H}$ is called the dimension of $u$ and denoted by $\dim u$. 
\end{dfn}The tensor product of the unitary corepresentation $u$ and $v$ is defined by $u\tp v:=u_{12}v_{13}$.
As was proved by S.L. Woronowicz, a unitary corepresentation may be decomposed into a direct sum of the irreducible ones and the irreducible corepresentations are finite dimensional.
In the course of this section we shall write $\widehat\mbG$ for the set of equivalence classes of irreducible corepresentations of $\mbG$.  For all $\alpha\in\widehat\mbG$ we choose  unitary representatives $u^\alpha\in\C_0(\mbG)\otimes\B(\mathcal{H}^\alpha)$. 
\begin{dfn}\label{cqhs} Let $\mbG$ be a compact quantum group and $\mbD$ a unital $\mbG$-$\C^*$-algebra. We say that $\mathbb{D}$ is a quantum homogeneous space in the sense of Podle\'s (QHSP) if $\mbD$ is ergodic: \[\{d\in\C_0(\mbD)\,|\,\Delta_{\mbD}(d)=1\otimes d\}=\mathbb{C}\cdot 1.\]
\end{dfn} 
\begin{ntn} Let $x,y\in\W^2(\mbG)$. We define $\omega_{x,y}\in\B(\W^2(\mbG))_*$ by the formula
$\omega_{x,y}(T)=(x|Ty)$ for any $T\in\B(\W^2(\mbG))$. For any $x,y\in \C_0(\mbG)\subset\W^2(\mbG)$ (note the identification of $\C_0(\mbG)$ with its image under the GNS-map $\Lambda_\phi$) and any $a\in \C_0(\mbG)$ we have $\omega_{x,y}(a)=\phi(x^*ay)$. In this context we shall use the notation $\omega_{x,y}(\cdot)=\phi(x^*\cdot y)$.
\end{ntn}
Let $\mbD$ be a QHSP and let us fix an irreducible corepresentation $\alpha\in\widehat\mbG$.  
Let $W^\alpha_{\mbD}\subset\C_0(\mbD)$ be the spectral subspace corresponding to $\alpha$. Choosing a basis in $\mathcal{H}^\alpha$ we may identify $u^\alpha$ with the matrix of elements $u^\alpha_{ij}\in\C_0(\mbG)$, $i,j=1,\ldots,\dim\alpha$. Then $W^\alpha_{\mbD}=\mbox{span}\{(\phi(u^{\alpha*}_{ij}\,\cdot)\otimes\iota)\Delta_{\mbD}(d)\,|\,d\in\C_0(\mbD),\,\,i,j=1,\ldots,\dim\alpha\}$. For a canonical definition  of $W^\alpha_{\mbD}$ we refer to Definition 2.3 of \cite{VaesErg} (see also \cite{BF} and \cite{Pod1}).  It turns out that the  $\dim W^\alpha_{\mbD}<\infty$. The precise estimation of $\dim W^\alpha_{\mbD}$ in terms of the quantum dimension  is the subject of Theorem 17 of \cite{BF}  and Theorem 2.5 of \cite{VaesErg}. We shall write $\mathcal{D}\subset\C_0(\mbD)$ for the subspace generated by the spectral subspaces: \begin{equation}\label{sme}\mathcal{D}=\oplus_{\alpha\in\widehat\mbG}W^\alpha_{\mbD}.\end{equation}
Treating $\mbG$ as a $\mbG$-$\C^*$-algebra we may consider the spectral subspaces $W^\alpha_{\mbG}$. The following finite dimensionality result will be used in the main theorem of this section.
\begin{lem}\label{pwt}
Let $\alpha\in\widehat\mbG$ be an irreducible corepresentation of $\mbG$, $y\in W^\alpha_\mbG$ and let $\mbD$ be a QHSP. Then $\{(\phi(y^*\cdot y)\otimes\iota)\Delta_{\mbD}(d)\,|\,d\in\C_0(\mbD)\}$ is a finite dimensional subspace of $\C_0(\mbD)$. 
\end{lem}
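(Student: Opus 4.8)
The plan is to combine the Peter--Weyl decomposition of the ergodic coaction $\Delta_{\mbD}$ with the orthogonality relations for the Haar state $\phi$, and then to upgrade the resulting estimate from the dense spectral subalgebra $\mathcal{D}$ of \eqref{sme} to all of $\C_0(\mbD)$ by a boundedness argument. Throughout I would take for granted the standard structure theory of coactions of compact quantum groups (see \cite{Pod1}, \cite{BF}, \cite{VaesErg}): $\mathcal{D}=\oplus_{\gamma\in\widehat\mbG}W^\gamma_{\mbD}$ is a norm-dense $*$-subalgebra of $\C_0(\mbD)$, and, with the conventions of this section, $\Delta_{\mbD}$ restricts on each spectral subspace to an inclusion $\Delta_{\mbD}(W^\gamma_{\mbD})\subseteq W^\gamma_{\mbG}\otimes W^\gamma_{\mbD}$ into a finite algebraic tensor product, where $W^\gamma_{\mbG}=\Span\{u^\gamma_{pq}\}$ is the span of the matrix coefficients of $\gamma$.

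First I would fix $\gamma\in\widehat\mbG$ and $d\in W^\gamma_{\mbD}$ and write $\Delta_{\mbD}(d)=\sum_{p,q}u^\gamma_{pq}\otimes d_{pq}$ with $d_{pq}\in W^\gamma_{\mbD}$, so that $(\phi(y^*\cdot y)\otimes\iota)\Delta_{\mbD}(d)=\sum_{p,q}\phi(y^*u^\gamma_{pq}y)\,d_{pq}$. The crucial point is that, since $y\in W^\alpha_{\mbG}$, each element $y^*u^\gamma_{pq}y$ is a linear combination of products $u^{\alpha*}_{mn}\,u^\gamma_{pq}\,u^\alpha_{m'n'}$, and hence a linear combination of matrix coefficients of the corepresentation $\bar\alpha\otimes\gamma\otimes\alpha$. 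Since $\phi$ vanishes on every matrix coefficient of a corepresentation having no trivial subrepresentation, $\phi(y^*u^\gamma_{pq}y)=0$ unless the trivial corepresentation occurs in $\bar\alpha\otimes\gamma\otimes\alpha$, which by Frobenius reciprocity happens exactly when $\gamma$ occurs in $\alpha\otimes\bar\alpha$. Letting $S_\alpha\subseteq\widehat\mbG$ denote the set of irreducible components of the finite-dimensional corepresentation $\alpha\otimes\bar\alpha$ --- a finite set --- this shows that $(\phi(y^*\cdot y)\otimes\iota)\Delta_{\mbD}(d)$ lies in $W^\gamma_{\mbD}$ when $\gamma\in S_\alpha$ and vanishes otherwise. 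By linearity, $(\phi(y^*\cdot y)\otimes\iota)\Delta_{\mbD}(d)\in V:=\oplus_{\gamma\in S_\alpha}W^\gamma_{\mbD}$ for every $d\in\mathcal{D}$, and $V$ is finite-dimensional because $S_\alpha$ is finite and, as recalled above, each $W^\gamma_{\mbD}$ is finite-dimensional.

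Next I would remove the restriction $d\in\mathcal{D}$. Since $\mbD$ is unital, $\Delta_{\mbD}$ maps $\C_0(\mbD)$ into $\C_0(\mbG)\otimes\C_0(\mbD)$; composing this contraction with the bounded slice map $\phi(y^*\cdot y)\otimes\iota=\omega_{y,y}\otimes\iota\colon\C_0(\mbG)\otimes\C_0(\mbD)\to\C_0(\mbD)$ shows that the assignment $d\mapsto(\phi(y^*\cdot y)\otimes\iota)\Delta_{\mbD}(d)$ is a bounded linear operator on $\C_0(\mbD)$. Its range is therefore contained in the norm closure of the range of its restriction to the dense subspace $\mathcal{D}$, hence in $\overline{V}=V$, the closure being unnecessary since $V$ is finite-dimensional. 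This yields $\{(\phi(y^*\cdot y)\otimes\iota)\Delta_{\mbD}(d)\mid d\in\C_0(\mbD)\}\subseteq\oplus_{\gamma\in S_\alpha}W^\gamma_{\mbD}$, a finite-dimensional subspace of $\C_0(\mbD)$, which is the assertion of the lemma.

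The only genuine content is the orthogonality observation in the second paragraph --- that out of all of $\widehat\mbG$ only the finitely many irreducibles occurring in $\alpha\otimes\bar\alpha$ can contribute --- and this is the step I expect to have to phrase most carefully; the remainder is routine once the two standard structural inputs (density of $\mathcal{D}$ and the inclusion $\Delta_{\mbD}(W^\gamma_{\mbD})\subseteq W^\gamma_{\mbG}\otimes W^\gamma_{\mbD}$) are in hand. I also note that the precise convention fixing which of $\gamma$ or $\bar\gamma$ labels the first leg of $\Delta_{\mbD}$ on $W^\gamma_{\mbD}$ is immaterial for the conclusion, since both $\{\gamma:\gamma\subseteq\alpha\otimes\bar\alpha\}$ and $\{\gamma:\gamma\subseteq\bar\alpha\otimes\alpha\}$ are finite.
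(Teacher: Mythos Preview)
Your argument is correct, but it proceeds along a different path than the paper's proof. The paper avoids decomposing $d$ along $\mathcal{D}$ and instead converts the two-sided slice into a one-sided one via the modular automorphism group $\sigma$ of the Haar state: using the KMS condition one has $\phi(y^*\cdot y)=\phi\big((y\sigma_{-i}(y^*))^*\,\cdot\,\big)$, and since $\sigma_{-i}(y^*)\in W^{\alpha^*}_{\mbG}$, the element $y\sigma_{-i}(y^*)$ lies in the span of the matrix coefficients of $\alpha\tp\alpha^*$. The result then drops out directly from the very definition of $W^{\alpha_k}_{\mbD}$ for the finitely many irreducibles $\alpha_1,\ldots,\alpha_n$ occurring in $\alpha\tp\alpha^*$, with no need for a density/continuity extension from $\mathcal{D}$ to $\C_0(\mbD)$.

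Your route has the virtue of being elementary: it uses only the Peter--Weyl orthogonality and the boundedness of slice maps, and in particular does not invoke the modular theory of $\phi$. The paper's route is shorter and yields the inclusion for all $d\in\C_0(\mbD)$ in one stroke, but at the price of the (standard, though less elementary) fact that the modular automorphisms preserve the spectral subspaces $W^\alpha_{\mbG}$. Both arguments land in the same finite-dimensional target $\bigoplus_k W^{\alpha_k}_{\mbD}$ with $\alpha_k$ ranging over the irreducible constituents of $\alpha\tp\alpha^*$ (your $S_\alpha$), so the conclusions are identical.
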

\begin{proof} 
Let $\sigma:\mathbb{R}\rightarrow\Aut(A)$ be the KMS-automorphism of the Haar state (see Theorem 1.4, \cite{Worcqg}). Note that
\begin{equation}\label{slice}(\phi(y^*\cdot y)\otimes\iota)\Delta_{\mbD}(d)
=(\phi((y\sigma_{-i}(y^*))^*\cdot 1)\otimes\iota)\Delta_{\mbD}(d).\end{equation}
Let $\alpha_1,\ldots\alpha_n$ be the irreducible corepresentations entering the decomposition of $\alpha\tp\alpha^*$ onto the irreducible components. Using Equation \eqref{slice} and the fact that $\sigma_{-i}(y^*)\in W^{\alpha^*}_{\mbG}$ we get $(\phi(y^*\cdot y)\otimes\iota)\Delta_{\mbD}(d)\in W^{\alpha_1}_{\mbD}\oplus\ldots\oplus W^{\alpha_n}_{\mbD}$. The finite dimensionality of $W^{\alpha_i}_{\mbD}$ implies that  $\dim\,\{(\phi(y^*\cdot y)\otimes\iota)\Delta_{\mbD}(d)\,|\,d\in\C_0(\mbD)\}<\infty$
\end{proof}
Let us move on to the von Neumann version of $\mbD$. For the details of the following standard construction we refer to \cite{BF} or \cite{Wang}.  The ergodicity of $\Delta_{\mbD}$ implies that $(\phi\otimes\iota)\Delta_{\mbD}(d)\in\mathbb{C}\cdot 1$. 
This enables us to define a state $\rho:\C_0(\mbD)\rightarrow\mathbb{C}$: $(\phi\otimes\iota)\Delta_{\mbD}(d)=\rho(d)1$. Using the injectivity of $\Delta_{\mbD}$ and the faithfulness of $\phi$ we may see that  $\rho$ is faithful. In order to check the $\mbD$-invariance of  $\rho$ we compute: 
\begin{eqnarray*}
(\iota\otimes\rho)\Delta_{\mbD}(d)&=&(\phi\otimes\iota\otimes\iota)(\iota\otimes\Delta_{\mbD})\Delta_{\mbD}(d)\\
&=&(\phi\otimes\iota\otimes\iota)(\Delta_{\mbG}\otimes\iota)\Delta_{\mbD}(d)\\
&=&(\phi\otimes\iota)\Delta_{\mbD}(d)=\rho(d)1.
\end{eqnarray*}
Let $(\W^2(\mbD),\pi_{\rho},\Lambda_{\rho})$ be the GNS-construction corresponding to $\rho$, i.e. $\pi_{\rho}:\C_0(\mbD)\rightarrow\B(\W^2(\mbD))$ is the GNS-representation and $\Lambda_{\rho}:\C_0(\mbD)\rightarrow\W^2(\mbD)$ is the GNS-map. There exists a unitary  operator $U_{\mbD}\in\B(\W^2(\mbG)\otimes\W^2(\mbD))$ such that $U_{\mbD}(\Lambda_{\phi}(a)\otimes\Lambda_{\rho}(d))=(\Lambda_{\phi}\otimes\Lambda_{\rho})(\Delta_{\mbD}(d)(a\otimes 1))$. In order to see that $U_{\mbD}$ is isometric one uses the $\mbD$-invariance of $\rho$. The fact that $U_{\mbD}$ is surjective follows from the continuity of $\Delta_{\mbD}$: $[\Delta_{\mbD}(\C_0(\mbD))(\C_0(\mbG)\otimes 1)]=\C_0(\mbG)\otimes\C_0(\mbD)$. 
In what follows we shall treat $\C_0(\mbG)$ and $\C_0(\mbD)$ as concrete $\C^*$-algebras acting on the Hilbert spaces $\W^2(\mbG)$ and $\W^2(\mbD)$ respectively. On the notational level it means that we shall skip $\pi_\rho$ and $\pi_\phi$ from all formula. 

 Let $N\subset\B(\W^2(\mbD))$ be the strong closure of $\C_0(\mbD)$. Noting that $\Delta_{\mbD}(d)=U_{\mbD}(1\otimes d)U_{\mbD}^*$ we may extend $\Delta_{\mbD}$ to $\Delta_N:N\rightarrow \W^\infty(\mbG)\bar\tens N$: $\Delta_N(x)=U_{\mbD}(1\otimes x)U_{\mbD}^*$ for any $x\in N$. In what follows we shall refer to $(N,\Delta_N)$ as a von Neumann version of $\mbD$. 
Before we move on to the proof that $(N,\Delta_N)$ is a QHS in the sense of Definition \ref{qhs2} let us extend  Lemma \ref{pwt} to the pair $(N,\Delta_N)$.
\begin{lem}\label{pwt1}
Let $(N,\Delta_N)$ be the von Neumann version of the quantum homogeneous space $\mbD$. Let $\alpha\in\widehat\mbG$ be an irreducible representation and let $y\in W^\alpha_{\mbG}$. Then $\{(\phi(y^*\cdot y)\otimes\iota)\Delta_N(x)|\,x\in N\}$ is finite dimensional. 
\end{lem}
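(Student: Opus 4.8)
The idea is to promote Lemma \ref{pwt} from $\C_0(\mbD)$ to its strong closure $N$ by a density-plus-continuity argument, using the strong continuity of $\Delta_N$ (which is the content of the map $x\mapsto U_{\mbD}(1\otimes x)U_{\mbD}^*$ being $*$-strongly continuous on bounded sets) together with the finite-dimensionality already established. First I would fix $\alpha\in\widehat\mbG$ and $y\in W^\alpha_\mbG$, and set $V_{\C}=\{(\phi(y^*\cdot y)\otimes\iota)\Delta_{\mbD}(d)\mid d\in\C_0(\mbD)\}$, which by Lemma \ref{pwt} is a finite-dimensional (hence norm-closed, hence strongly closed) subspace of $\C_0(\mbD)\subseteq N$. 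The claim will be that $V_N:=\{(\phi(y^*\cdot y)\otimes\iota)\Delta_N(x)\mid x\in N\}$ coincides with $V_\C$, which of course gives the finite-dimensionality.

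The inclusion $V_\C\subseteq V_N$ is immediate since $\Delta_N$ restricts to $\Delta_{\mbD}$. For the reverse inclusion, fix $x\in N$. By Kaplansky density there is a uniformly bounded net $d_i\in\C_0(\mbD)$ converging $*$-strongly to $x$. Then $\Delta_N(d_i)=U_{\mbD}(1\otimes d_i)U_{\mbD}^*\to U_{\mbD}(1\otimes x)U_{\mbD}^*=\Delta_N(x)$ $*$-strongly, and applying the normal slice map $\phi(y^*\cdot y)\otimes\iota=\omega_{\Lambda_\phi(y),\Lambda_\phi(y)}\otimes\iota$ (normality is the point here — a vector functional composed with a slice is weakly continuous) we get $(\phi(y^*\cdot y)\otimes\iota)\Delta_{\mbD}(d_i)\to(\phi(y^*\cdot y)\otimes\iota)\Delta_N(x)$ weakly. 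Each term of the net lies in $V_\C$; since $V_\C$ is finite-dimensional it is weakly closed, so the limit lies in $V_\C$. Hence $V_N\subseteq V_\C$ and the two spaces agree, proving $\dim V_N<\infty$.

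The one point requiring a little care — and the only genuine obstacle — is the weak (or $\sigma$-weak) continuity of the map $z\mapsto(\phi(y^*\cdot y)\otimes\iota)(z)$ from $\W^\infty(\mbG)\bar\tens N$ to $N$: this is the standard fact that slicing with a normal functional is $\sigma$-weakly continuous, and $\phi(y^*\cdot y)=\omega_{\Lambda_\phi(y),\Lambda_\phi(y)}$ is normal on $\W^\infty(\mbG)=\B(\W^2(\mbG))$-compressed, so there is nothing subtle, only bookkeeping. Alternatively one can avoid the net entirely: observe that $(\phi(y^*\cdot y)\otimes\iota)\Delta_N(\cdot)$ is a normal completely bounded map on $N$ whose restriction to the strongly dense $\C_0(\mbD)$ has finite-dimensional range $V_\C$; a normal map is determined on a von Neumann algebra by its values on a strongly dense $*$-subalgebra together with the closedness of $V_\C$, forcing the range to stay inside $V_\C$. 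Either route gives the lemma with essentially no computation.
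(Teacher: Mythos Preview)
Your argument is correct and follows essentially the same route as the paper: approximate $x\in N$ by a net $d_i\in\C_0(\mbD)$, observe that the slices $(\phi(y^*\cdot y)\otimes\iota)\Delta_N(d_i)$ lie in the finite-dimensional space furnished by Lemma~\ref{pwt}, and use that this space is closed in the relevant operator topology to trap the limit. The only cosmetic difference is that the paper phrases the convergence as strong (writing the slice as $T_y^*(\cdot)T_y$ with $T_y\colon v\mapsto\Lambda_\phi(y)\otimes v$ makes this immediate) whereas you use the weak topology via normality of the vector functional; either works since all Hausdorff vector topologies agree on a finite-dimensional space.
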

\begin{proof}
Let $x\in N$ and let $d_i\in\C_0(\mbD)$ be a net that strongly converges to $x$. Using the notation of Lemma \ref{pwt} we may see that  $x_i\equiv(\phi(y^*\cdot y)\otimes\iota)\Delta_N(d_i)\in W^{\alpha_1}_{\mbD}\oplus\ldots\oplus W^{\alpha_n}_{\mbD}$. The strong convergence of the net $x_i$ and the fact that the subspace $W^{\alpha_1}_{\mbD}\oplus\ldots\oplus W^{\alpha_n}_{\mbD}$ is closed in the strong topology (this follows from its finite dimensionality) imply that $x\in W^{\alpha_1}_{\mbD}\oplus\ldots\oplus W^{\alpha_n}_{\mbD}$. In particular $\dim \{(\phi(y^*\cdot y)\otimes\iota)\Delta_N(x)|\,x\in N\}<\infty $.
\end{proof}
\begin{thm}\label{cqhs1} Let $\mbG$ be a compact quantum group with a faithful Haar state $\phi$ and let $\mbD$ be a quantum homogeneous space of $\mbG$ in the sense of Podle\'s. Let $(N,\Delta_N)$ be  the von Neumann version of $\mbD$ described above. Then $(N,\Delta_N)$ is a quantum homogeneous space in the sense of Definition \ref{qhs2} and its $\C^*$-algebraic version coincides with $\mbD$.
\end{thm}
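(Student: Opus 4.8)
The plan is to verify the three conditions of Definition \ref{qhs2} for the pair $(N,\Delta_N)$ with the candidate $\C^*$-algebraic version $\mbD$. The first two conditions are essentially built into the construction: $\C_0(\mbD)$ is by definition strongly dense in $N$, and $\Delta_N$ was defined so that it restricts to $\Delta_{\mbD}$ on $\C_0(\mbD)$ (both are $x\mapsto U_{\mbD}(1\otimes x)U_{\mbD}^*$). One does need to check that $\Delta_N$ genuinely maps $N$ into $\W^\infty(\mbG)\bar\tens N$ rather than merely into $\B(\W^2(\mbG))\bar\tens N$; this follows because $\Delta_N(N)$ is the strong closure of $\Delta_{\mbD}(\C_0(\mbD))\subset\C_0(\mbG)\otimes\C_0(\mbD)$ together with the fact that $U_{\mbD}$ implements a genuine coaction (the coassociativity $(\Delta_{\mbG}\otimes\iota)\Delta_N=(\iota\otimes\Delta_N)\Delta_N$ is inherited from the corepresentation property of $U_{\mbD}$). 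Also note the coaction is ergodic since the fixed-point algebra of $\Delta_N$ contains that of $\Delta_{\mbD}$ which is $\mathbb{C}1$, and conversely a fixed element of $N$ is fixed under the state-preserving conditional expectation onto $\mathbb{C}1$.

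The substance is the third condition: $\Delta_N(N)\subset\M(\cml\otimes\C_0(\mbD))$ and the strictness of $\Delta_N:N\to\mathcal{L}(\W^2(\mbG)\otimes\C_0(\mbD))$. Here is where the spectral-subspace machinery — Lemma \ref{pwt} and Lemma \ref{pwt1} — enters. The idea is to slice $\Delta_N(x)$ against rank-one operators $\omega_{a,b}$ with $a,b\in\C_0(\mbG)\subset\W^2(\mbG)$. For $x\in N$ one has $(\omega_{a,b}\otimes\iota)\Delta_N(x)=(\phi(a^*\cdot b)\otimes\iota)\Delta_N(x)$, and by a polarization/density argument it suffices to treat $a=b=y$ ranging over a total set, which one may take to lie in the spectral subspaces $W^\alpha_{\mbG}$ (these span a dense subspace of $\C_0(\mbG)$). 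Lemma \ref{pwt1} then says precisely that $(\phi(y^*\cdot y)\otimes\iota)\Delta_N(x)$ lives in a fixed finite-dimensional subspace of $\C_0(\mbD)$ as $x$ ranges over $N$; in particular every such slice lies in $\C_0(\mbD)$, not just in $N$. From this one deduces $\Delta_N(x)\in\M(\cml\otimes\C_0(\mbD))$: compactness of $\cml$ lets one recover $\Delta_N(x)(k\otimes d)$ and $(k\otimes d)\Delta_N(x)$, for $k\in\cml$ and $d\in\C_0(\mbD)$, from these finite-rank slices, and finite-dimensionality upgrades the weak statement to norm membership in $\cml\otimes\C_0(\mbD)$.

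For strictness, suppose $x_i\in N$ is a uniformly bounded net converging $*$-strongly to $x$, and fix $y\in\W^2(\mbG)\otimes\C_0(\mbD)$; one must show $\Delta_N(x_i)y\to\Delta_N(x)y$ in norm. Reducing to $y$ of the form $(k\otimes d)v$ with $k=k_{y_0}$ a rank-one projection associated to $y_0\in W^\alpha_{\mbG}$, the element $\Delta_N(x_i)(k\otimes d)$ is, by Lemma \ref{pwt1}, the image of the slice $(\phi(y_0^*\cdot y_0)\otimes\iota)\Delta_N(x_i)$ (up to the flip into the appropriate leg), which lives in a fixed finite-dimensional — hence norm-closed and on which $*$-strong agrees with norm — subspace of $\C_0(\mbD)$; therefore $*$-strong convergence $x_i\to x$ forces norm convergence of these slices, and hence norm convergence of $\Delta_N(x_i)y$. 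Then invoke Remark \ref{weakcond} to conclude the strictness formulation of Definition \ref{qhs2}. The main obstacle I anticipate is the bookkeeping that turns the finite-dimensional slice statements of Lemma \ref{pwt1} into genuine statements about $\Delta_N(x)$ as a multiplier and about norm (rather than $*$-strong) convergence of $\Delta_N(x_i)y$: one must check that the rank-one operators built from $y_0\in\bigcup_\alpha W^\alpha_{\mbG}$ are total enough in $\mathcal{K}(\W^2(\mbG))$ to control all of $\cml\otimes\C_0(\mbD)$, and handle the leg-numbering/flip carefully so that slicing against the left leg of $\W^2(\mbG)\otimes\W^2(\mbG)$ in the definition of $U_{\mbD}$ corresponds correctly to the slices appearing in the lemmas. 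Once the third condition is established, uniqueness of $\mbD$ follows from Proposition \ref{emb1}, so the identification of the $\C^*$-algebraic version with $\mbD$ is automatic.
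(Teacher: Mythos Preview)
Your overall approach matches the paper's: verify ergodicity via the invariant state $\rho$, note that the first two conditions of Definition~\ref{qhs2} hold by construction, and establish the strictness condition by reducing to rank-one projections $P_y$ with $y\in W^\alpha_{\mbG}$, using the finite-dimensional slices of Lemma~\ref{pwt1}, and then invoking Remark~\ref{weakcond}.

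There is, however, a concrete gap in your strictness argument. You assert that $\Delta_N(x_i)(k_{y_0}\otimes d)$ ``is the image of the slice $(\phi(y_0^*\cdot y_0)\otimes\iota)\Delta_N(x_i)$ (up to the flip into the appropriate leg),'' but this is not correct: with the rank-one projection $P_{y_0}$ only on the \emph{right}, the operator $\Delta_N(x_i)(P_{y_0}\otimes 1)$ has range not confined to $\mathbb{C}y_0\otimes\W^2(\mbD)$ and is not determined by any single slice of $\Delta_N(x_i)$. The functional $\phi(y_0^*\cdot y_0)$ corresponds to sandwiching by $P_{y_0}\otimes 1$ on \emph{both} sides. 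The paper's remedy is to use the $\C^*$-identity to pass to $x_i^*x_i$:
\[
\|\Delta_N(x_i)(P_{y_0}\otimes 1)\|^2=\|(P_{y_0}\otimes 1)\Delta_N(x_i^*x_i)(P_{y_0}\otimes 1)\|=\|(\phi(y_0^*\cdot y_0)\otimes\iota)\Delta_N(x_i^*x_i)\|,
\]
and then apply Lemma~\ref{pwt1} to the net $x_i^*x_i$ (which is uniformly bounded and also $*$-strongly converges to $0$). Finite dimensionality of the target space then upgrades strong convergence of the slice to norm convergence, giving $\|\Delta_N(x_i)(P_{y_0}\otimes 1)\|\to 0$. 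This ``square the norm'' step is the missing ingredient; once inserted, your argument coincides with the paper's. The same issue affects your separate verification that $\Delta_N(N)\subset\M(\cml\otimes\C_0(\mbD))$, but the paper (like you) simply appeals to Remark~\ref{weakcond} so that this follows automatically from the convergence statement.
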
 
\begin{proof}
Let us first extend  $\rho$ to a n.s.f. state on $N$. Using the extension of $\phi$ to $\W^\infty(\mbG)$ we define $\rho(x)1=(\phi\otimes\iota)\Delta_N(x)$.  Note that  $\rho:N\rightarrow\mathbb{C}$ is  $\Delta_N$-invariant.  Suppose that $\Delta_N(x)=1\otimes x$ for some $x\in N$. The computation \[\rho(x)1=(\phi\otimes\iota)\Delta_N(x)=(\phi\otimes\iota)(1\otimes x)=x\] shows that $\Delta_N$ is ergodic. 

It is clear that $\C_0(\mbD)$ is strongly dense in $N$ and that the coaction $\Delta_N$ when restricted to $\C_0(\mbD)$ coincides with $\Delta_{\mbD}$. 
In order to prove the strictness of $\Delta_N$ let us consider a uniformly bounded net $x_i\in N$, $||x_i||\leq C$ converging  to $0$ in the $*$-strong topology. Note that $x_i^*x_i\in N$ $*$-strongly converges to $0$. Indeed,  $||x_i^*x_iv||\leq  C||x_iv||\rightarrow 0$ for any $v\in\W^2(\mbD)$.
Let $\alpha\in\widehat{\mbG}$ be an irreducible corepresentation and let us fix  $y\in W^\alpha_{\mbG}$. Let $P_y$ be the projection onto the $1$-dimensional subspace spanned by $y\in\W^2(\mbG)$. Using the uniform boundness of $x_i$ and the unitality of $\C_0(\mbD)$ one may see that in order to prove the strictness of $\Delta_N$ it is enough to show that  $\Delta_N(x_i)(P_y\otimes 1)$ converges to zero in the norm sense. Indeed, using the fact that $x_i$ is uniformly bounded we see that the norm convergence of $\Delta_N(x_i)(P_y\otimes 1)$ implies the norm convergence of $\Delta_N(x_i)(k\otimes 1)$ for any $k\in\cml$. This in turn implies  that for any $b\in\cml\otimes\C_0(\mbD)$ the net $\Delta_N(x_i)b$ norm converges to 0. Finally, in order to see that for any $x\in N$ we have $\Delta_N(x)\in\M(\cml\otimes\C_0(\mbD))$ let us fix a uniformly bounded net $d_i\in\C_0(\mbD)$ that  $*$-strongly converges to $x$. The above argument shows that $\Delta_{\mbD}(d_i)b$ norm converges to $\Delta_N(x)b$ which together with Remark \ref{weakcond} shows that $\Delta_N(x)\in\M(\cml\otimes\C_0(\mbD))$. 

Let us move on to the proof of the fact that $\Delta_{N}(x_i)(P_y\otimes 1)$ norm converges to $0$. We compute:
\begin{eqnarray*}
||\Delta_{N}(x_i)(P_y\otimes 1)||^2&=&||(P_y\otimes 1)\Delta_{N}(x_i^*x_i)(P_y\otimes 1)||\\
&=&||(\phi(y^*\cdot\,y)\otimes\iota)\Delta_{N}(x_i^*x_i)||.
\end{eqnarray*}
The finite dimensionality of the subspace $\{(\phi(y^*\cdot\,y)\otimes\iota)\Delta_{N}(x_i^*x_i)|\,x\in N\}$ proved in Lemma \ref{pwt1} and the fact that $(\phi(y^*\cdot\,y)\otimes\iota)\Delta_{N}(x_i^*x_i)$  converges strongly to zero imply the norm convergence  \begin{equation*}\lim_i\,(\phi(y^*\cdot\,y)\otimes\iota)\Delta_{N}(x_i^*x_i)=0.\qedhere\end{equation*}
\end{proof}
In the  next theorem we shall prove that an ergodic coaction of a compact quantum group on a von Neumann algebra is automatically a QHS in the sense of Definition \ref{qhs2}.
\begin{thm} 
Let $\mbG$ be a compact quantum group and let $\Delta_N:N\rightarrow\W^\infty(\mbG)\bar{\tens} N$ be an ergodic coaction of $\mbG$ on a von Neumann algebra $N$. Then $(N,\Delta_N)$ is a QHS in the sense of Definition \ref{qhs2}.
\end{thm}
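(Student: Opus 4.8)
The strategy is to reduce the general ergodic coaction on a von Neumann algebra to the situation already handled in Theorem \ref{cqhs1} by exhibiting a canonical strongly dense unital $\mbG$-$\C^*$-subalgebra $D\subset N$ and showing that the pair $(D,\Delta_N|_D)$ is the $\C^*$-algebraic version we need. The candidate for $D$ is the norm closure of the span of the spectral subspaces. Concretely, for each $\alpha\in\widehat\mbG$ put $W^\alpha_N=\{(\phi(u^{\alpha*}_{ij}\cdot)\otimes\iota)\Delta_N(x)\,|\,x\in N,\ i,j=1,\ldots,\dim\alpha\}$, let $\mathcal{D}=\oplus_{\alpha\in\widehat\mbG}W^\alpha_N$ be the algebraic sum (the Podle\'s subalgebra of $N$), and set $D=\overline{\mathcal{D}}^{\,\|\cdot\|}$. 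First I would record that $\mathcal{D}$ is a unital $*$-subalgebra of $N$ on which $\Delta_N$ restricts to an algebraic coaction sending $W^\alpha_N$ into $\C_0(\mbG)_\alpha\otimes W^\alpha_N$ (the usual Peter--Weyl bookkeeping, identical to the argument underlying Lemma \ref{pwt}); the finite dimensionality of each $W^\alpha_N$ follows exactly as in Lemma \ref{pwt1}, now using the ergodicity of $\Delta_N$ to produce the faithful invariant state $\rho$ via $(\phi\otimes\iota)\Delta_N(x)=\rho(x)1$, and running the spectral-subspace estimate of \cite{BF}, \cite{VaesErg}. Then $\Delta_N$ maps $D$ into $\M(\C_0(\mbG)\otimes D)$ and the continuity $[\Delta_N(D)(\C_0(\mbG)\otimes 1)]=\C_0(\mbG)\otimes D$ holds because on the dense algebraic level it is the Peter--Weyl decomposition and $[\C_0(\mbG)_\alpha\,\C_0(\mbG)]=\C_0(\mbG)$. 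This makes $\mbD:=(D,\Delta_N|_D)$ a unital $\mbG$-$\C^*$-algebra.

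Next I would check that $D$ is strongly dense in $N$. This is where the GNS picture of $\rho$ enters: as in the construction preceding Theorem \ref{cqhs1} one builds the unitary $U_{\mbD}$ implementing $\Delta_N$ on $\W^2(N,\rho)$, and the cyclicity of $\Lambda_\rho$ together with the density of $\mathcal{D}$ in $N$ in the $\|\cdot\|_\rho$-norm (standard ergodic-coaction fact: spectral subspaces are $\|\cdot\|_\rho$-dense) shows that $D$ acts on a dense subspace; combined with the fact that $\rho$ is faithful, $D''=N$. At this point $(N,\Delta_N)$ together with $\mbD$ satisfies the first two bullets of Definition \ref{qhs2}, and ergodicity is given; it remains to verify the third bullet, the strictness of $\Delta_N\colon N\to\mathcal{L}(\W^2(\mbG)\otimes D)$. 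For this I would copy verbatim the argument in the proof of Theorem \ref{cqhs1}: reduce via uniform boundedness to showing $\Delta_N(x_i)(P_y\otimes 1)\to 0$ in norm for $x_i\to 0$ $*$-strongly and $y\in W^\alpha_\mbG$, compute $\|\Delta_N(x_i)(P_y\otimes1)\|^2=\|(\phi(y^*\cdot y)\otimes\iota)\Delta_N(x_i^*x_i)\|$, and invoke Lemma \ref{pwt1} (which as noted above goes through unchanged for this $N$) to upgrade strong convergence to zero into norm convergence, using finite dimensionality of the target subspace. Then Remark \ref{weakcond} upgrades this to $\Delta_N(N)\subset\M(\cml\otimes D)$ with $\Delta_N$ strict, and $\mbD$ is the $\C^*$-algebraic version by Proposition \ref{emb1}.

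The main obstacle is the strong density $D''=N$, i.e.\ the claim that the algebraic span of spectral subspaces is weakly dense in $N$. For a compact quantum group acting ergodically on a \emph{von Neumann} algebra this is a known but nontrivial fact (the coaction is automatically integrable, $\rho$ is the unique invariant normal state, and the isotypic projections $x\mapsto(\iota\otimes\phi)(\Delta_N(x)(u^\alpha_{ij}\otimes1))$ give a complete family of normal, $\|\cdot\|_\rho$-continuous maps onto the $W^\alpha_N$, so their ranges span a $\|\cdot\|_\rho$-dense subspace and hence a weakly dense one); I would cite \cite{BF} or \cite{Wang} for the precise statement rather than reproving it. Everything else is a transcription of the compact-quotient arguments already in Section \ref{secqhsp}, with "QHSP $\mbD$" replaced by "the Podle\'s subalgebra $D\subset N$''.
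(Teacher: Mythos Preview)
Your proposal is correct and follows precisely the route the paper itself takes: the paper's proof is only a sketch, defining $\mathcal{D}$ as the direct sum of spectral subspaces in the von Neumann setting, taking its $\C^*$-completion as $\mbD$, and deferring all verifications to ``the techniques of the proof of Theorem \ref{cqhs1}.'' You have simply unpacked that sketch, correctly flagged the strong density $D''=N$ as the one point requiring outside input (ergodic coactions of compact quantum groups on von Neumann algebras), and reproduced the strictness argument via Lemma \ref{pwt1} and Remark \ref{weakcond} exactly as intended.
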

\begin{proof} Let us only sketch  the proof. Extending the definition of the spectral subspaces to the von Neumann setting we may define $\mathcal{D}\subset N$ - the direct sum of the spectral subspaces of $\Delta_N$ (compare with \eqref{sme}).
It may be checked that the $\C^*$-completion of $\mathcal{D}$ gives rise to  a $\mbG$-$\C^*$-algebra which we shall denote by $\mbD$. Using the techniques of the proof of Theorem \ref{cqhs1} one may see that  $\mbD$ satisfies all conditions of Definition \ref{qhs2}. 
\end{proof}
\begin{rem} The  above theorem shows that $\mbD$ is a unital $\mbG$-$\C^*$-algebra, where the unit is provided by the trivial representation entering $N$ under the form of $\mathbb{C}1\subset N$. This observation rules out  from our framework the paradoxical examples of non-compact quantum homogeneous spaces of a compact quantum group given  in \cite{woreq2}. 
\end{rem}
\end{section}
\begin{section}{Rieffel deformation of homogeneous spaces}\label{secrd}
Rieffel deformation is a well established method of deforming $\C^*$-algebras. In his original approach  M. Rieffel starts from  deformation data $(A,\rho,J)$ which consists of a  $\C^*$-algebra $A$, an action $\rho$ of $\mathbb{R}^n$  on $A$ and a skew symmetric matrix $J:\mathbb{R}^n\rightarrow\mathbb{R}^n$. Using these data M. Rieffel was able to deform the original product on the algebra of $\rho$-smooth elements: $A^\infty \subset A$. The deformed $\C^*$-algebra $A^J$ is defined as a $\C^*$-algebraic completion of $A^\infty$ considered as an algebra with this deformed product. The Rieffel deformation which was originally developed to deform $\C^*$-algebras (see \cite{Rf1}) may then be applied for the deformation of locally compact quantum groups (see \cite{Rf2}).
\begin{subsection}{Rieffel deformation via crossed products}\label{rdcp}
In our recent approach to the Rieffel deformation  (see \cite{pkrd})  we formulate the deformation  procedure in terms of the crossed product structure. The starting point is the choice of the deformation data $(A,\rho,\Psi)$ where $A$ is a  $\C^*$-algebra, $\rho$ is a continuous action  of an abelian group $\Gamma$ on $A$ and $\Psi$ is a  $2$-cocycle  on the dual group $\Hat\Gamma$.
We shall only consider continuous unitary $2$-cocycles, i.e.  continuous functions
  $\Psi:\hat{\Gamma}\times
\hat{\Gamma}\rightarrow
 \mathbb{T}$ satisfying:
\begin{itemize}
\item[(i)] $\Psi(e,\hat\gamma)=\Psi(\hat\gamma,e)=1$ for all $\hat\gamma\in\Hat\Gamma$;
\item[(ii)]$\Psi(\hat{\gamma}_1,\hat{\gamma}_2+\hat{\gamma}_3)
\Psi(\hat{\gamma}_2,\hat{\gamma}_3)
=\Psi(\hat{\gamma}_1+\hat{\gamma}_2,\hat{\gamma}_3)\Psi(
\hat{\gamma}_1,\hat{\gamma}_2) $ for all
$\hat{\gamma}_1,\hat{\gamma}_2,\hat{\gamma}_3\in\Hat\Gamma$.
\end{itemize}
For the theory of $2$-cocycles we refer to \cite{kl}.

 The deformation procedure of $A$ consists of the following steps:\\
1. Let $B$ be the  crossed product $\C^*$-algebra $B=\Gamma\ltimes_\rho A$ and  let $(B,\lambda,\hat\rho)$ be the $\Gamma$-product structure on this  crossed product i.e. $\lambda:\Gamma\rightarrow\M(B)$ is the representation of $\Gamma$ implementing the action $\rho$ and $\hat\rho$ is the dual action on $B$.\\
2. Let $\lambda\in\Mor(\C^*(\Gamma),B)$ be the morphism corresponding to the representation $\lambda\in\Rep(\Gamma,B)$ and let $\Psi_{\hg}\in\M(\C^*(\HG))$ be the family of functions given by $\Psi_{\hg}(\hg')=\Psi(\hg',\hg)$. Applying $\lambda$ to $\Psi_{\hg}$ (note the identification of $\C_0(\HG)$ with $\C^*(\Gamma)$ via the Fourier transform), we get a $\hat\rho$-projective unitary 1-cocycle  $U_{\hg}\in\M(B)$:\[U_{\hat\gamma_1+\hat\gamma_2}=\overline{\Psi(\hat\gamma_1,\hat\gamma_2)}U_{\hat\gamma_1}\hat\rho_{\gamma_1}(U_{\hat\gamma_2}).\] We define the deformed dual action $\hat\rho^\Psi:\HG\rightarrow\Aut(B)$ by the formula: $\hat\rho_{\hg}(b)=U_{\hg}^*\hat\rho_{\hg}(b)U_{\hg}$,\,\,for any $\hg\in\HG$ and $b\in B$.\\
3. The deformed $\C^*$-algebra $A^\Psi$ is defined as the Landstad algebra of the deformed $\Gamma$-product $(B,\lambda,\hat\rho^\Psi)$:
\[A^\Psi=\left\{b\in M(B)\left|\begin{array}{l}1.\,\,\hat\rho^\Psi_{\hg}(b)=b\\
2.\,\,\mbox{The map }\Gamma\ni\gamma\mapsto\lambda_\gamma a\lambda_\gamma^*\in \M(B)\\
\mbox{\,\,\,\,\,\, is norm-continuous}\\
3.\,\,\lambda(x)a,a\lambda(x)\in B \mbox{ for any }x\in \C^*(\Gamma)
\end{array}\right.\right\}\]
\end{subsection}
\begin{subsection}{Rieffel deformation of $\mbG$-$\C^*$-algebras}\label{rdlg} Let $\G$ be a locally compact group, $\mbG$ the corresponding LCQG (see Section \ref{seccl}) and 
let $\mbD$ be a $\mbG$-$\C^*$-algebra. 
As was shown in \cite{pkrdgc}, the Rieffel deformation can also be used for deforming $\mbD$ where we simultaneously deform $\mbG$, $\C_0(\mbD)$ and the coaction $\Delta_{\mbD}$. In what follows we shall give a concise description of the deformation procedure.

The deformation of $\mbG$ described below is the dual version of the $2$-cocycle twist of the comultiplication $\Delta_{\widehat\mbG}$  studied in \cite{V}  and then developed in \cite{DC} and \cite{FV}. The twisted coaction is given by the formula: $\Delta_{\widehat\mbG^\Psi}(x)=\Psi^*\Delta_{\widehat\mbG}(x)\Psi$ for any $x\in\C_0(\widehat\mbG)$. The $\C^*$-algebra of the deformed LCQG $\mbG^\Psi$ is constructed as follows. Let $\Gamma$ be a closed abelian subgroup of $\G$, $\Psi$  a continuous $2$-cocycle on $\HG$ and let $\rho:\Gamma^2\rightarrow\Aut(\C_0(\G))$ be the  continuous action given by the left and the right shifts along $\Gamma$. Let us consider a $2$-cocycle $\Psi\otimes\tilde\Psi$ on $\HG^2$ where $\tilde\Psi$ is defined by the formula: 
\begin{equation}\label{deftilpsi}\tilde{\Psi}(\hat\gamma_1,\hat\gamma_2)=\overline{\Psi(-\hat\gamma_1,-\hat\gamma_2)}.\end{equation} The deformation data $(\C_0(\G),\rho,\Psi\otimes\tilde\Psi)$  gives rise to the $\C^*$-algebra $\C_0(\mbG^\Psi)$, by means of the deformation procedure described in Section \ref{rdcp}. 

In order to construct the comultiplication $\Delta_{\mbG^\Psi}$ we employ the $\rho$-covariance properties of the the comultiplication $\Delta_{\mbG}$:
\begin{align} \label{cov1}(\rho_{e,\gamma}\otimes\rho_{\gamma,e})(\Delta_{\mbG}(f))&=\Delta_{\mbG}(f),\\
\label{cov}\Delta_{\mbG}(\rho_{\gamma,\gamma'}(f))&=(\rho_{\gamma,e}\otimes\rho_{e,\gamma'})(\Delta_{\mbG}(f)),
\end{align}
for all  $f\in\C_0(\G)$, $\gamma,\gamma'\in\Gamma$. 
Let $(B,\lambda,\hat\rho)$ be the standard $\Gamma^2$-product structure on $B=\Gamma^2\ltimes_\rho\C_0(\G)$. Using \eqref{cov} and the universal properties of the crossed product  we may extend $\Delta_{\mbG}$ to a morphism $\Delta^\Gamma_{\mbG}:B\rightarrow\M(B\otimes B)$. It is uniquely determined by the following two properties: 
\begin{itemize}
\item the action of $\Delta^\Gamma_{\mbG}$ on $\C_0(\mbG)\subset\M(B)$ consides with $\Delta_{\mbG}$;
\item the action of $\Delta^\Gamma_{\mbG}$ on $\lambda_{\gamma,\gamma'}\in\M(B)$ is given by $\Delta^\Gamma_{\mbG}(\lambda_{\gamma,\gamma'})=\lambda_{\gamma,e}\otimes\lambda_{e,\gamma'}$.
\end{itemize} Let us consider a unitary representation $U:\Gamma^2\rightarrow\M(B\otimes B)$
\[ U_{\gamma,\gamma'}=\lambda_{e,\gamma}\otimes\lambda_{\gamma',e}\] and a function $\Psi^{\star}\in\M(\C_0(\Hat\Gamma)\otimes\C_0(\Hat\Gamma))$ given by 
$\Psi^{\star}=\Psi(-\hat\gamma_1,\hat\gamma_1+\hat\gamma_2)$. Extending $U$ to the morphism $\pi_U:\C^*(\Gamma^2)\rightarrow\M(B\otimes B)$ we may apply it to $\Psi^\star$ obtaining \begin{equation}\label{defup}\Upsilon=\pi_U(\Psi).\end{equation}  Let us define $\Delta^\Gamma_{\mbG^\Psi}:B\rightarrow\M(B\otimes B)$
\begin{equation}\label{twstep}\Delta^\Gamma_{\mbG^\Psi}(b)=\Upsilon\Delta_{\mbG}^\Gamma(b)\Upsilon^*.\end{equation} Using \eqref{cov1} and \eqref{cov} one may show that $\Delta^\Gamma_{\mbG^\Psi}$ when restricted to $\C_0(\mbG^\Psi)\subset\M(B)$ defines the comultiplication: $\Delta_{\mbG^\Psi}:\C_0(\mbG^\Psi)\rightarrow\M(\C_0(\mbG^\Psi)\otimes\C_0(\mbG^\Psi))$. 

Suppose now that $\mbD$ is a $\mbG$-$\C^*$-algebra. Restricting the $\mbG$-coaction to the subgroup $\Gamma$ we get an action $\rho':\Gamma\rightarrow\C_0(\mbD)$. Performing the Rieffel deformation based on the deformation data $(\C_0(\mbD),\rho',\Psi)$  we get the $\C^*$-algebra $\C_0(\mbD^\Psi)$. In order to define $\Delta_{\mbD^\Psi}$ we first extend $\Delta_{\mbD}$ to a morphism
$\Delta_{\mbD}^\Gamma\in\Mor(\Gamma\ltimes_{\rho'}\C_0(\mbD),B\otimes\Gamma\ltimes_{\rho'}\C_0(\mbD))$.  In the analogy with \eqref{twstep} we may define $\Delta_{\mbD^\Psi}^\Gamma\in\Mor(\Gamma\ltimes_{\rho'}\C_0(\mbD),B\otimes\Gamma\ltimes_{\rho'}\C_0(\mbD))$. Restricting $\Delta_{\mbD^\Psi}^\Gamma$ to $\C_0(\mbD^\Psi)\subset\M(\Gamma\ltimes_{\rho'}\C_0(\mbD))$ we get the coaction $\Delta_{\mbD^\Psi}\in\Mor(\C_0(\mbD^\Psi),\C_0(\mbG^\Psi)\otimes\C_0(\mbD^\Psi))$. As was shown in \cite{pkrdgc} the above construction leads to a  $\mbG^\Psi$-$\C^*$-algebra $\mbD^\Psi$. Actually, the reader may have noticed some differences between the above description of the deformation procedure of $\mbD$ and the one given in \cite{pkrdgc}. They are due to the adopted duality relation between  $\mbG^\Psi$ and $\widehat\mbG^\Psi=(\C_0(\widehat\mbG),\Psi^*\Delta_{\widehat\mbG}(\cdot)\Psi)$ and the fact that $\mbD$ is a left $\mbG$-$\C^*$-algebra whereas in \cite{pkrdgc} we consider the right case.
\end{subsection}
\begin{subsection}{Rieffel deformation of homogeneous spaces}
Let $\X$ be a $\G$-homogeneous space and $\mbX$ the corresponding $\mbG$-$\C^*$-algebra (see Section \ref{seccl}). Let $\Gamma$ be a closed subgroup of $\G$ and $\Psi$ a $2$-cocycle on $\Hat\Gamma$. As was described in the previous section we may define the $\mbG^\Psi$-$\C^*$-algebra $\mbX^\Psi$. The aim of this section is to construct the von Neumann version $(\W^\infty(\mbX^\Psi),\Delta_{\W^\infty(\mbX^\Psi)})$ of $\mbX^\Psi$ and show that it is a QHS in the sense of Definition \ref{qhs2} with the corresponding $\C^*$-algebraic version coinciding with $\mbX^\Psi$.   
Let us move on to the construction of $(\W^\infty(\mbX^\Psi),\Delta_{\W^\infty(\mbX^\Psi)})$.

 As may be expected by the reader, $\W^\infty(\mbX^\Psi)$ is defined as a  strong closure of $\C_0(\mbX^\Psi)$. To be more precise we may take the strong closure  inside the von Neumann crossed product $\Gamma\ltimes_{\rho'}\W^\infty(\mbX)$.  
In order to keep the connection with the Rieffel deformation we need also  the description of $\W^\infty(\mbX^\Psi)$ in terms of the invariant element under the twisted dual action $\hat\rho'^\Psi:\Gamma\rightarrow\Aut(\Gamma\ltimes_{\rho'}\W^\infty(\mbX))$ (see Section \ref{rdcp}). For the later purposes we shall interpret the dual action $\hat\rho':\Gamma\rightarrow\Aut(\Gamma\ltimes_{\rho'}\W^\infty(\mbX))$ as a right coaction $\hat\rho':\Gamma\ltimes_{\rho'}\W^\infty(\mbX)\rightarrow\Gamma\ltimes_{\rho'}\W^\infty(\mbX)\bar{\tens}\W^\infty(\Hat\Gamma)$. Under this interpretation the twisted dual action is given by: 
\begin{equation}\label{intc}\hat\rho'^\Psi(x)=\Psi^*\hat\rho'(x)\Psi.\end{equation} The von Neumann algebra $\W^\infty(\mbX^\Psi)$ introduced above coincides with the algebra of the $\hat\rho'^\Psi$-coinvariants
\[\W^\infty(\mbX^\Psi)=\{x\in\Gamma\ltimes_{\rho'}\W^\infty(\mbX)|\,\hat\rho'^\Psi(x)=x\otimes 1\}.\]
Our next aim is to  extend  the $\C^*$-algebraic coaction $\Delta_{\mbD^\Psi}$ to the von Neumann  level and define $\Delta_{\W^\infty(\mbX^\Psi)}:\W^\infty(\mbX^\Psi)\rightarrow\W^\infty(\mbG^\Psi)\bar\tens\W^\infty(\mbX^\Psi)$. In order to do it we make the following observations:
\begin{itemize}
\item The crossed product $\Gamma\ltimes_{\rho'}\W^\infty(\mbX)$ may be faithfully represented on $\W^2(\mbG)$. Indeed, identifying  $\X$ with a quotient space $\G/\G_x$ where $\G_x$ is a stabilizing subgroup of a point $x\in X$ and using the idea of Section 4 of \cite{Vaes} one may check that there exists an injective normal $*$-homomorphism that sends $\W^\infty(\mbX)\subset\Gamma\ltimes_{\rho'}\W^\infty(\mbX)$ to $\W^\infty(\G/\G_x)\subset\B(\W^2(\mbG))$ and  $\lambda_\gamma\in\Gamma\ltimes_{\rho'}\W^\infty(\mbX)$ to the left shift $\W_\gamma\in\B(\W^2(\mbG)))$.
\item Let us note that the representation $\Gamma^2\ni(\gamma_1,\gamma_2)\mapsto \W_{\gamma_1}\otimes \W_{\gamma_2}\in\B(\W^2(\mbG)\otimes\W^2(\mbG))$ when extended to $\C^*(\Gamma^2)$ provides an interpretation of the unitary $\Upsilon\in\M(\C^*(\Gamma^2))$ (see \eqref{defup}) as an element of $\B(\W^2(\mbG)\otimes\W^2(\mbG))$. This in turn enables us to define  the unitary \[U=\Upsilon W^*(\Hat{J}J\otimes 1)V\in\B(\W^2(\mbG)\otimes\W^2(\mbG))\]
- for the definition of $J$ and $\Hat{J}$ we refer to Section \ref{prem}.  It turns out that $U$ implements  $\Delta_{\mbX^\Psi}$: $\Delta_{\mbX^\Psi}(x)=U(1\otimes x)U^*$. In order to prove  that  it is enough to check the following two equalities:
\begin{itemize}\item[(i)] $U(1\otimes\W_\gamma)U^*=\W_\gamma\otimes 1=\Delta_{\mbX^\Psi}(\lambda_\gamma)$,
\item[(ii)] $U(1\otimes f)U^*=\Upsilon\Delta_{\mbX}(f)\Upsilon^*=\Delta_{\mbX^\Psi}(f)$ for any $\gamma\in\Gamma$ and $f\in\C_0(\X)$.
\end{itemize} The sufficiency of that may be concluded from Section \ref{rdlg}.  
\end{itemize}
The above observations shows that we may define the  von Neumann counterpart of $\Delta_{\mbX^\Psi}$ by the formula 
$\Delta_{\W^\infty(\mbX^\Psi)}(x)=U(1\otimes x)U^*$ where $x\in\W^\infty(\mbX^\Psi)$. 
\begin{rem}
Using the embedding of $\Gamma\ltimes_{\rho'}\W^\infty(\mbX)$ into $\B(\W^2(\mbG))$ described above one may express the $\hat{\rho}'^\Psi$-invariance of $x\in\W^\infty(\mbX^\Psi)$ by the equation 
\begin{equation}\label{invx}W(x\otimes 1)W^*=\Psi(x\otimes 1)\Psi^*,\end{equation} 
where $W$ is the left regular corepresentation of $\mbG$.
It follows from  Eq. \eqref{intc} and the fact that $W$ implements the dual coaction $\hat\rho'(x)=W(x\otimes 1)W^*$.
\end{rem}
Let us now show that the coaction of $\Delta_{\W^\infty(\mbX^\Psi)}$ is ergodic. Suppose that $\Delta_{\W^\infty(\mbX^\Psi)}=1\otimes x$ for some $x\in\W^\infty(\mbX^\Psi)$. Note that $\Gamma$ is a  closed subgroup of $\mbG^\Psi$ in the sense of Definition 2.5 of \cite{Vaes}. In particular the $\mbG^\Psi$-invariance of  $x$ implies its $\Gamma$-invariance. In other words $x$ commutes with the image  of the representation $\lambda:\Gamma\rightarrow\Gamma\ltimes_{\rho'}\W^\infty(\mbX)$. This together with the  $\hat{\rho}'^\Psi$-invariance of $x$ shows that $x\in\W^\infty(\mbX)$ and $\Delta_{\W^\infty(\mbX)}(x)=1\otimes x$. The ergodicity of $\Delta_{\W^\infty(\mbX)}$ implies that $x\in\mathbb{C}1$ which proves the ergodicity of $\Delta_{\W^\infty(\mbX^\Psi)}$ . 

 The case of the Rieffel deformation of the quotient space $\G/\G_0$ for which $\Gamma\subset \G_0\subset \G$ was considered in \cite{pkrdhs}. It was proved there that under some modular assumption concerning the embedding $\Gamma\subset\G$  and some assumptions on $\Psi$ we have $(\G/\G_0)^\Psi\cong\mbG^\Psi/\mbG_0^\Psi$. By the results of \cite{Vaes}, the right hand side of this equality gives rise to a QHS. It turns out that the proof given in \cite{pkrdhs} may be simplified and adopted to the proof of the fact that $(\W^\infty(\mbX^\Psi),\Delta_{\W^\infty(\mbX^\Psi)})$ is a QHS where we do not assume that $\Gamma\subset\G_0$, there are no modular assumption concerning the embedding $\Gamma\subset\G$ and no assumption about a $2$-cocycle $\Psi$ except its continuity. In this more general situation we expect to obtain generically a QHS of the non-quotient type. Let us first prove the following lemma.
\begin{lem}\label{cpr} Let $\X$  be a $\G$-homogeneous space and $\mbX$ the corresponding $\mbG$-$\C^*$-algebra. Let $\Gamma\subset\G$ be a closed abelian subgroup of $\G$ and $\Psi$ a continuous $2$-cocycle on the dual group $\Hat\Gamma$. Let $\mbX^\Psi$ be the Rieffel deformation of $\mbX$. Then the corresponding crossed-product $\C^*$-algebras are isomorphic: $\mbG\ltimes\C_0(\mbX)\cong\mbG^\Psi\ltimes\C_0(\mbX^\Psi)$.
\end{lem}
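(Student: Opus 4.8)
The plan is to exhibit an explicit isomorphism at the level of the crossed products by tracking how the Rieffel deformation acts on all the relevant structures simultaneously. Recall that by \eqref{cacp} we have $\mbG\ltimes\C_0(\mbX)=[\Delta_{\mbX}(\C_0(\mbX))(\C_0(\widehat\mbG)\otimes 1)]$ and likewise $\mbG^\Psi\ltimes\C_0(\mbX^\Psi)=[\Delta_{\mbX^\Psi}(\C_0(\mbX^\Psi))(\C_0(\widehat\mbG^\Psi)\otimes 1)]$. The key point I would use is that deformation is a \emph{$2$-cocycle twist}: the deformed dual quantum group $\widehat\mbG^\Psi=(\C_0(\widehat\mbG),\Psi^*\Delta_{\widehat\mbG}(\cdot)\Psi)$ has the \emph{same} underlying $\C^*$-algebra $\C_0(\widehat\mbG)$ as $\widehat\mbG$, and similarly the unitary $U=\Upsilon W^*(\Hat J J\otimes 1)V$ implementing $\Delta_{\mbX^\Psi}$ differs from the implementation of $\Delta_{\mbX}$ precisely by the unitary cocycle $\Upsilon$. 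So I expect the isomorphism to be realized by conjugation with a single unitary built from $\Upsilon$ and the cocycle $\Psi$ (viewed inside $\M(\C_0(\widehat\mbG)\otimes\C_0(\mbX))$ or the appropriate multiplier algebra), not by any genuinely new algebraic construction.

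First I would set up all the objects on the common Hilbert space $\W^2(\mbG)$, using the faithful representation of $\Gamma\ltimes_{\rho'}\W^\infty(\mbX)$ on $\W^2(\mbG)$ described in Section~\ref{rdlg} (the one sending $\W^\infty(\mbX)$ to $\W^\infty(\G/\G_x)$ and $\lambda_\gamma$ to $\W_\gamma$), together with the standard representation of $\C_0(\widehat\mbG)$ on $\W^2(\mbG)$. This realizes both crossed products concretely as norm-closed subalgebras of $\B(\W^2(\mbG)\otimes\W^2(\mbG))$: $\mbG\ltimes\C_0(\mbX)$ is spanned by elements $W^*(1\otimes f)W\,(\hat a\otimes 1)$ and $\mbG^\Psi\ltimes\C_0(\mbX^\Psi)$ by elements $U(1\otimes x)U^*\,(\hat a\otimes 1)$ with $x\in\W^\infty(\mbX^\Psi)$, $\hat a\in\C_0(\widehat\mbG)$. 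Next I would compute the conjugate of a generic generator of $\mbG\ltimes\C_0(\mbX)$ by the unitary $\Upsilon\in\B(\W^2(\mbG)\otimes\W^2(\mbG))$ (or by a suitable refinement of it involving $\Psi$): using the covariance relations \eqref{cov1}, \eqref{cov} and the defining properties of $\Upsilon$ from \eqref{defup}, one sees that conjugation by $\Upsilon$ carries $\Delta_{\mbX}(\C_0(\mbX))$ into $\Upsilon\Delta_{\mbX}(\C_0(\mbX))\Upsilon^*$, which by construction \eqref{twstep} is exactly the image of $\C_0(\mbX^\Psi)$ under $\Delta_{\mbX^\Psi}$, while the second leg factor $\C_0(\widehat\mbG)\otimes 1$ is preserved since the first leg of $\widehat\mbG^\Psi$ coincides with that of $\widehat\mbG$. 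Finally I would check that this conjugation is a $*$-isomorphism \emph{onto} by running the same computation in reverse with $\Psi$ replaced by its inverse (equivalently, using that the deformation of $\mbX^\Psi$ by the opposite cocycle recovers $\mbX$, which follows from \cite{pkrd}, \cite{pkrdgc}), and that it is spatially well-defined, i.e.\ the conjugating unitary actually normalizes the relevant multiplier algebras so that the map restricts to the $\C^*$-subalgebras in question.

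The main obstacle I anticipate is bookkeeping rather than conceptual: one must be careful about \emph{which} legs the cocycles $\Psi$, $\Upsilon$, $\tilde\Psi$, $\Psi^\star$ act on and about the distinction between the left $\mbG$-module structure used here and the right-handed conventions of \cite{pkrdgc}, as flagged in Section~\ref{rdlg}. In particular, the unitary realizing the isomorphism will not simply be $\Upsilon$ but some product like $\Upsilon$ times a copy of $\Psi$ sitting in the $\widehat\mbG\otimes\mbX$ legs, and verifying that this composite unitary simultaneously (a) intertwines $W^*(1\otimes\cdot)W$ with $U(1\otimes\cdot)U^*$ on the $\W^\infty(\mbX)$-level, (b) intertwines the $\lambda_\gamma$'s correctly via $\W_\gamma$, and (c) fixes the $\C_0(\widehat\mbG)\otimes 1$ leg, is where the cocycle identities (i)--(ii) of Section~\ref{rdcp} and the covariance relations \eqref{cov1}--\eqref{cov} all have to be deployed together. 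Once the right unitary is identified, each of (a), (b), (c) should be a short direct verification, and the statement follows.
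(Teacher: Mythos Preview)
Your approach is genuinely different from the paper's, and there is a conceptual slip that would prevent it from going through as written. You assert that conjugation by $\Upsilon$ carries $\Delta_{\mbX}(\C_0(\mbX))$ onto $\Delta_{\mbX^\Psi}(\C_0(\mbX^\Psi))$; but by \eqref{twstep} one has $\Upsilon\,\Delta^{\Gamma}_{\mbX}(b)\,\Upsilon^{*}=\Delta^{\Gamma}_{\mbX^\Psi}(b)$ for $b$ in the $\Gamma$-crossed product, so $\Upsilon\,\Delta_{\mbX}(\C_0(\mbX))\,\Upsilon^{*}=\Delta^{\Gamma}_{\mbX^\Psi}(\C_0(\mbX))$, \emph{not} $\Delta_{\mbX^\Psi}(\C_0(\mbX^\Psi))$. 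The point is that $\C_0(\mbX)$ and $\C_0(\mbX^\Psi)$ are two different Landstad subalgebras of $\M(\Gamma\ltimes_{\rho'}\C_0(\mbX))$, and nothing in the cocycle data gives you a unitary conjugating one onto the other; the ``extra copy of $\Psi$'' you hope to append does not address this, because the passage $\C_0(\mbX)\leadsto\C_0(\mbX^\Psi)$ is a change of dual action, not an inner automorphism.

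The paper sidesteps this entirely: rather than seeking a conjugating unitary, it shows that the two $\mbG$-crossed products are literally the \emph{same} subalgebra of a common multiplier algebra. The chain is
\[
[\C_0(\widehat\mbG)\,\C_0(\mbX)]=[\C_0(\widehat\mbG)\,\C^*(\Gamma)\,\C_0(\mbX)]=[\C_0(\widehat\mbG)\,(\Gamma\ltimes\C_0(\mbX))]=[\C_0(\widehat\mbG^\Psi)\,\C^*(\Gamma)\,\C_0(\mbX^\Psi)],
\]
using three ingredients: the absorption $[\C_0(\widehat\mbG)\C^*(\Gamma)]=\C_0(\widehat\mbG)$ (since $\Gamma\subset\G$), the identity $\Gamma\ltimes\C_0(\mbX)=\Gamma\ltimes\C_0(\mbX^\Psi)$ from \cite{pkrd}, and $\C_0(\widehat\mbG^\Psi)=\C_0(\widehat\mbG)$. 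The anticipated ``bookkeeping obstacle'' you flag is therefore not merely a technicality but a symptom: the right unitary is the identity, and the real work is recognizing the intermediate $\Gamma$-crossed product as the pivot between the two sides.
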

\begin{proof}
Note that $\C_0(\widehat\mbG)$ and $\C_0(\mbX)$ may be embedded into $\M(\mbG\ltimes\C_0(\mbX))$ and they together generate it: $\mbG\ltimes\C_0(\mbX)=[\C_0(\widehat\mbG)\C_0(\mbX)]$. In order to  embed $\C_0(\mbX)$ we use the coaction $\Delta_{\mbX}$ - see \eqref{cacp}. The fact that $\Gamma\ltimes\C_0(\mbX)$ may also be embedded into $\M(\mbG\ltimes\C_0(\mbX))$, the equality   $\C_0(\widehat\mbG)=\C_0(\widehat\mbG^\Psi)$ together with the isomorphism \begin{equation}\label{cpe}\Gamma\ltimes\C_0(\mbX)\cong\Gamma\ltimes\C_0(\mbX^\Psi)\end{equation} (proved in  Proposition 3.2 of \cite{pkrd}) leads to
\begin{eqnarray*}\mbG\ltimes\C_0(\mbX)&=&[\C_0(\widehat\mbG)\C_0(\mbX)]=[\C_0(\widehat\mbG)\C^*(\Gamma)\C_0(\mbX)]=[\C_0(\widehat\mbG)(\Gamma\ltimes\C_0(\mbX))]\\&=&[\C_0(\widehat\mbG^\Psi)\C^*(\Gamma)\C_0(\mbX^\Psi)]=\mbG^\Psi\ltimes\C_0(\mbX^\Psi). \end{eqnarray*}  
Note that in the second and the fourth equality we used the fact that $\C_0(\widehat\mbG)=[\C^*(\Gamma)\C_0(\widehat\mbG)]$ and in the fourth equality we used $\C_0(\widehat\mbG^\Psi)=\C_0(\widehat\mbG)$.
\end{proof}
\begin{rem}\label{nt1} In this remark we give a concise description of the induction procedure of the regular representation $\G_0\ni g\mapsto\W^{\G_0}_g\in\mathcal{L}(\C_0(\widehat\mbG_0))$.
Since our ultimate aim  is to prove that $\mbX^\Psi$ gives rise to a quantum homogeneous  $\mbG^\Psi$-space, we shall stick to the S. Vaes' formulation and notation of the induction procedure for locally compact quantum groups given in \cite{Vaes}. We shall do so even in the case of the classical closed subgroup $\G_0$ of a locally compact group $\G$. 

The induction procedure applied to the left regular representation $\G_0\ni g\mapsto\W^{\G_0}_g\in\mathcal{L}(\C_0(\widehat\mbG_0))$  gives rise to the induced $\C_0(\widehat\mbG_0)$-module $\Ind(\C_0(\widehat\mbG_0))$ and the induced representation $\Ind(\W^{\G_0}):\G\rightarrow\mathcal{L}(\Ind(\C_0(\widehat\mbG_0)))$. Let $\alpha:\G\rightarrow\Aut(\W^\infty(\mbX))$ be the action given by the left shifts: $\alpha_g(f)(x)=f(g^{-1}x)$. As is always the case for the induced representations, $\Ind(\C_0(\widehat\mbG_0))$ is equipped with a faithful strict $*$-homomorphism $\rho:\W^\infty(\mbX)\rightarrow\mathcal{L}(\Ind(\C_0(\widehat\mbG_0)))$  which is covariant with respect to $\Ind(\W^{\G_0})$: $\rho(\alpha_g(f))=\Ind(\W^{\G_0})_{g}\rho(f)\Ind(\W^{\G_0})_{g}^*$. 

In the course of the proof of Theorem 6.1 of \cite{Vaes} it was noted that we have the following identification  \begin{equation}\label{cocr}\mathcal{K}(\Ind(\C_0(\widehat\mbG_0)))\cong\mbG\ltimes\C_0(\mbX).\end{equation}
In particular $\mathcal{K}(\Ind(\C_0(\widehat\mbG_0)))$ is equipped with the dual (right) coaction - the counterpart of the dual coaction on $\mbG\ltimes\C_0(\mbX)$: 
\begin{equation}\label{dualcoc}
\gamma:\mathcal{K}(\Ind(\C_0(\widehat\mbG_0)))\rightarrow\M(\mathcal{K}(\Ind(\C_0(\widehat\mbG_0)))\otimes\C_0(\widehat\mbG)).
\end{equation}

The data described above may be encoded in  the bicovariant $\C_0(\widehat\mbG_0)$-bicorrespondence $\mathcal{F}$ (see Section 3.1 of \cite{Vaes}). As a Hilbert $\C_0(\widehat\mbG_0)$-module, $\mathcal{F}$ is the tensor product $\mathcal{F}=\W^2(\mbG)\otimes\Ind(\C_0(\widehat\mbG_0))$. The structure of the bicovariant   $\C_0(\widehat\mbG_0)$-bicorrespondence on $\mathcal{F}$ is given by:
\begin{itemize}
\item a strict $*$-homomorphism $\pi_l:\W^\infty(\widehat\mbG)\rightarrow\mathcal{L}(\mathcal{F})$ which sends a generator $\W_g$ to $\pi_l(\W_g)=\W_g\otimes\Ind(\W^{\G_0})_g$;
\item a strict $*$-antihomomorphism $\pi_r:\W^\infty(\widehat\mbG)\rightarrow\mathcal{L}(\mathcal{F})$ which sends a generator $\W_g$ to $\pi_r(\W_g)=\R_g\otimes 1$;
\item a strict $*$-homomorphism $\pi:\W^\infty(\mbG)\rightarrow \mathcal{L}(\mathcal{F})$ given by $\pi(f)=f\otimes 1$ for any $f\in\W^\infty(\mbG)$.
\end{itemize}
The bicovariance relation linking $\pi_l,\pi_r$ and $\pi$ and the right regular corepresentation  $\Hat{V}\in\W^\infty(\mbG)\otimes\W^\infty(\widehat\mbG)$ (see Eq. \eqref{rrr}) is described in Definition 3.5 of \cite{Vaes}. 
 It may be shown that $\pi_l$ together with $\rho$ gives rise to a strict $*$-homomorphism $\sigma:\mbG\ltimes\W^\infty(\mbX)\rightarrow\mathcal{L}(\mathcal{F})$ such that $\sigma(\W_g)=\pi_l(\W_g)$ and $\sigma(f)=\rho(f)$ for any $g\in\G$ and $f\in\W^\infty(\mbX)$. In what follows $\sigma$ will be denoted by $\pi_l$. 
\end{rem}
\begin{thm} Let $\X$ be a $\G$-homogeneous space, $\Gamma\subset\G$  a closed abelian subgroup and $\Psi$ a continuous $2$-cocycle on the Pontryagin dual $\Hat\Gamma$. Let $\mbX^\Psi$ be the Rieffel deformation of\, $\mbX$ described above. Then the von Neumann version $(\W^\infty(\mbX^\Psi),\Delta_{\mbX^\Psi})$ of $\mbX^\Psi$ is a QHS with the corresponding $\C^*$-algebraic version coinciding with $\mbX^\Psi$. 
\end{thm}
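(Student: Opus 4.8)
The plan is to check the three requirements of Definition \ref{qhs2} for $(\W^\infty(\mbX^\Psi),\Delta_{\W^\infty(\mbX^\Psi)})$, the candidate $\C^*$-algebraic version being $\mbX^\Psi$. Two of them come essentially for free: ergodicity of $\Delta_{\W^\infty(\mbX^\Psi)}$ has already been established above, while the strong density of $\C_0(\mbX^\Psi)$ in $\W^\infty(\mbX^\Psi)$ and the equality of $\Delta_{\W^\infty(\mbX^\Psi)}$ restricted to $\C_0(\mbX^\Psi)$ with $\Delta_{\mbX^\Psi}$ are immediate from the construction, since $\W^\infty(\mbX^\Psi)$ is by definition the strong closure of $\C_0(\mbX^\Psi)$ and the implementing unitary $U=\Upsilon W^*(\Hat{J}J\otimes 1)V$ was fixed precisely so as to recover $\Delta_{\mbX^\Psi}$ on $\C_0(\mbX^\Psi)$ (bullets (i),(ii) above). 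Hence the whole content lies in the third condition, namely that $\Delta_{\W^\infty(\mbX^\Psi)}(\W^\infty(\mbX^\Psi))\subset\M(\cml\otimes\C_0(\mbX^\Psi))$ and that $\Delta_{\W^\infty(\mbX^\Psi)}\colon\W^\infty(\mbX^\Psi)\to\mathcal{L}(\W^2(\mbG)\otimes\C_0(\mbX^\Psi))$ is strict. Once this is in hand, Proposition \ref{emb1} automatically identifies $\mbX^\Psi$ as the unique $\C^*$-algebraic version.

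For the strictness condition the strategy is to transport the corresponding property from the undeformed homogeneous space. Writing $\X\cong\G/\G_0$ for a point stabiliser $\G_0$, the undeformed pair $(\W^\infty(\mbX),\Delta_{\mbX})$ is a QHS with $\C^*$-version $\mbX$ by Theorem 6.1 of \cite{Vaes} (as recorded in Section \ref{seccl}); in particular $\Delta_{\mbX}$ is strict and lands in $\M(\cml\otimes\C_0(\mbX))$. The bridge to the deformed setting is Lemma \ref{cpr}, which gives $\mbG^\Psi\ltimes\C_0(\mbX^\Psi)\cong\mbG\ltimes\C_0(\mbX)\cong\mathcal{K}(\Ind(\C_0(\widehat\mbG_0)))$, together with the bicovariant $\C_0(\widehat\mbG_0)$-bicorrespondence $\mathcal{F}=\W^2(\mbG)\otimes\Ind(\C_0(\widehat\mbG_0))$ of Remark \ref{nt1}, on which $\C_0(\mbX^\Psi)$ and its strong closure $\W^\infty(\mbX^\Psi)$ are realised through $\rho$. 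As a preliminary I would record the $\C^*$-level relation $[\Delta_{\mbX^\Psi}(\C_0(\mbX^\Psi))(\cml\otimes 1)]=\cml\otimes\C_0(\mbX^\Psi)$, which follows from the continuity of the $\mbG^\Psi$-coaction, $[\Delta_{\mbX^\Psi}(\C_0(\mbX^\Psi))(\C_0(\mbG^\Psi)\otimes 1)]=\C_0(\mbG^\Psi)\otimes\C_0(\mbX^\Psi)$, together with the nondegeneracy identity $[\cml\,\C_0(\mbG^\Psi)]=\cml$; it exhibits $\cml\otimes\C_0(\mbX^\Psi)$ as the distinguished sub-$\C^*$-algebra of the compacts of the bicorrespondence that the coaction is to preserve.

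The main step is then to carry out the transport. Removing the cocycle, $U_0=W^*(\Hat{J}J\otimes 1)V$ implements the extension $\Delta^\Gamma_{\mbX}$ of the undeformed coaction to the crossed product $\Gamma\ltimes_{\rho'}\W^\infty(\mbX)$ (one checks this on the generators $\W_\gamma$ and $f\in\C_0(\X)$, using that $\Upsilon$, lying in the von Neumann algebra generated by the commuting $\W_{\gamma_1}\otimes\W_{\gamma_2}$, commutes with $\W_\gamma\otimes 1$), and strictness of $\Delta^\Gamma_{\mbX}$ into $\mathcal{L}(\W^2(\mbG)\otimes(\Gamma\ltimes_{\rho'}\C_0(\mbX)))$ is inherited from that of $\Delta_{\mbX}$. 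Since $\Delta_{\W^\infty(\mbX^\Psi)}(x)=\Upsilon\,\Delta^\Gamma_{\mbX}(x)\,\Upsilon^*$ for $x\in\W^\infty(\mbX^\Psi)\subset\Gamma\ltimes_{\rho'}\W^\infty(\mbX)$, conjugation by the unitary $\Upsilon$ carries this strictness over, and the $\hat\rho'^\Psi$-coinvariance characterising $\W^\infty(\mbX^\Psi)$, combined with the relation of the previous paragraph, pins the image down inside the smaller multiplier algebra $\M(\cml\otimes\C_0(\mbX^\Psi))$ rather than merely $\M(\cml\otimes(\mbG^\Psi\ltimes\C_0(\mbX^\Psi)))$. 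Strictness of $\Delta_{\W^\infty(\mbX^\Psi)}$ itself then follows from the density criterion of Remark \ref{weakcond}: choosing, by Kaplansky density, a uniformly bounded net $d_i\in\C_0(\mbX^\Psi)$ converging $*$-strongly to $x\in\W^\infty(\mbX^\Psi)$, one obtains $\Delta_{\mbX^\Psi}(d_i)b\to\Delta_{\W^\infty(\mbX^\Psi)}(x)b$ in norm for every $b\in\cml\otimes\C_0(\mbX^\Psi)$, the convergence being transported from the undeformed strictness.

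I expect the genuine obstacle to be exactly the cut-down in the third paragraph: a priori the canonical implementation only places $\Delta_{\W^\infty(\mbX^\Psi)}(\W^\infty(\mbX^\Psi))$ inside $\M(\cml\otimes(\mbG^\Psi\ltimes\C_0(\mbX^\Psi)))$, and showing that the deformation cocycle $\Upsilon$ together with the $\hat\rho'^\Psi$-invariance actually force it into $\M(\cml\otimes\C_0(\mbX^\Psi))$ requires a careful bookkeeping of how $\Upsilon$ and the $\lambda_\gamma$ interact with the induced-module structure on $\mathcal{F}$. This is precisely the point at which the argument of \cite{pkrdhs}, originally written for the quotient case $\Gamma\subset\G_0$ under extra modular hypotheses on $\Gamma\subset\G$, has to be genuinely reworked so as to dispense with those hypotheses; everything else in the proof is bookkeeping around Lemma \ref{cpr}, Remark \ref{nt1} and the cited Vaes result.
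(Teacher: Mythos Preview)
Your outline is accurate through the reduction to the third condition of Definition \ref{qhs2}, and you correctly single out Lemma \ref{cpr} and Remark \ref{nt1} as the relevant inputs. However, the mechanism you propose for the strictness---transporting strictness of $\Delta_{\mbX}$ to $\Delta^\Gamma_{\mbX}$ on $\Gamma\ltimes_{\rho'}\W^\infty(\mbX)$ and then conjugating by $\Upsilon$---has a real gap that you yourself flag in your last paragraph: even granting strictness of $\Delta^\Gamma_{\mbX}$ with values in $\mathcal{L}(\W^2(\mbG)\otimes(\Gamma\ltimes_{\rho'}\C_0(\mbX)))$, conjugation by $\Upsilon$ only lands you in a multiplier algebra over the crossed product, and the ``cut-down'' to $\M(\cml\otimes\C_0(\mbX^\Psi))$ is left unaddressed. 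Moreover, your inheritance claim for $\Delta^\Gamma_{\mbX}$ is itself not justified.

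The paper sidesteps the cut-down entirely by arguing differently. Instead of working with $\Delta_{\W^\infty(\mbX^\Psi)}$ directly, it proves that the canonical embedding $\iota:\W^\infty(\mbX^\Psi)\to\M(\mbG^\Psi\ltimes\C_0(\mbX^\Psi))$ is strict, and then composes with the tautological inclusion $\mbG^\Psi\ltimes\C_0(\mbX^\Psi)=[\Delta_{\mbX^\Psi}(\C_0(\mbX^\Psi))(\C_0(\widehat\mbG^\Psi)\otimes 1)]\hookrightarrow\M(\cml\otimes\C_0(\mbX^\Psi))$; under this inclusion $\iota$ becomes $\Delta_{\W^\infty(\mbX^\Psi)}$, so no cut-down is needed. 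The strictness of $\iota$ is obtained from the already-strict map $\pi_l$ of Remark \ref{nt1} via a single key formula: working on $\W^2(\mbG)\otimes\Ind(\C_0(\widehat\mbG_0))\otimes\W^2(\mbG)$ and using \eqref{cocr}, one has $\pi_l(x)_{12}=\Sigma_{13}\gamma_{23}(x)\Sigma_{13}$, while the $\hat\rho'^\Psi$-invariance \eqref{invx} gives $\gamma(x)=\Psi(x\otimes 1)\Psi^*$ for $x\in\C_0(\mbX^\Psi)$. Combining these yields $\Phi\,\pi_l(x)\,\Phi^*=1\otimes x$ for a unitary $\Phi$ built from $\Psi$, so $x\mapsto 1\otimes x$ is unitarily conjugate to the restriction of the strict map $\pi_l$ and therefore extends strictly to $\W^\infty(\mbX^\Psi)$. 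This use of the dual coaction $\gamma$ and the invariance equation \eqref{invx} is the missing idea in your sketch; it replaces the bookkeeping you anticipated around $\Upsilon$ and the $\lambda_\gamma$ with a one-line intertwining relation that makes the strictness immediate.
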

\begin{proof}  In the course of the proof we shall use the notation introduced in Remark \ref{nt1}.
Let us consider the $\C_0(\widehat\mbG_0)$-module $\W^2(\mbG)\otimes\Ind(\C_0(\widehat\mbG_0))\otimes\W^2(\mbG)$. Note that under the identification \eqref{cocr} we have \begin{equation}\label{pil}\pi_l(x)_{12}=\Sigma_{13}\gamma_{23}(x)\Sigma_{13}\end{equation} for any $x\in\M(\mbG\ltimes\C_0(\mbX))$. On the other hand, using Eq. \eqref{invx} we may see that  \[\gamma(x)=\Psi(x\otimes 1)\Psi^*,\] for any  $x\in\C_0(\mbX^\Psi)$. This equation together with \eqref{pil} shows that, for any $x\in\C_0(\mbX^\Psi)$ we have $\Phi\pi_l(x)\Phi^*=1\otimes x$ where $\Phi\in\M(\C^*(\Gamma)\otimes\C^*(\Gamma))$  is the unitary element given by $\Phi(\hat\gamma_1,\hat\gamma_2)=\overline{\Psi(\hat\gamma_2,\hat\gamma_1)}$. Using the strictness of $\pi_l$ and Lemma \ref{cpr} we conclude that the natural embedding of $\C_0(\mbX^\Psi)$ into $\M(\mbG^\Psi\ltimes\C_0(\mbX^\Psi))$ extends to the embedding $\iota: \W^\infty(\mbX^\Psi)\rightarrow\M(\mbG^\Psi\ltimes\C_0(\mbX^\Psi))$. 

Let us note that $\iota$ inherits the following strictness property: for any uniformly bounded, $*$-strongly convergent net $x_i\in\W^\infty(\mbX^\Psi)$ and any $y\in\mbG^\Psi\ltimes\C_0(\mbX^\Psi)$, the net $\iota(x_i)y$ is norm convergent.  Composing $\iota$ with the embedding  \[\mbG^\Psi\ltimes\C_0(\mbX^\Psi)=[\Delta_{\mbX^\Psi}(\C_0(\mbX^\Psi))(\C_0(\widehat\mbG^\Psi)\otimes 1)]\hookrightarrow\M(\cml\otimes\C_0(\mbX^\Psi))\]  we conclude that the map $\Delta_{\W^\infty(\mbX^\Psi)}:\W^\infty(\mbX^\Psi)\rightarrow\W^\infty(\mbG^\Psi)\bar\tens\W^\infty(\mbX^\Psi)$ gives rise to a strict $*$-homomorphism $\Delta_{\W^\infty(\mbX^\Psi)}:\W^\infty(\mbX^\Psi)\rightarrow\mathcal{L}(\W^2(\mbG^\Psi)\otimes\C_0(\mbX^\Psi))$. This shows that third condition  of Definition \ref{qhs2} is satisfied. The first and the second are trivially satisfied in our case. 
\end{proof}
\end{subsection}
\end{section}


\begin{thebibliography}{666} 
\bibitem{BS} {\sc S.~Baaj, G.~Skandalis:} \emph{Unitaires multiplicatifs et dualit\'e pour les produits crois\'es de $C^*$-algebres}, Ann.~Sci.
Ec.~Nor.~Sup.  \textbf{26} (1993), no.~4, p. 425 -- 488.
\bibitem{VaesErg}{\sc  J.~Bichon, A.~D.~Rijdt, S.~Vaes:} \emph{Ergodic coactions with large multiplicity and monoidal equivalence of quantum groups}, Commun.~in Math.~Phys. \textbf{262} (2006), p. 703 -- 728.
\bibitem{Lindblad}{\sc J.L.~Blaavand:} \emph{Homogeneous spaces} Internet notes, \texttt{http://home.imf.au.dk/blaavand/unitary.pdf}. 
\bibitem{BF}{\sc F.~P.~Boca:} \emph{Ergodic actions of compact matrix pseudogroups on C*-algebras}, Rec.~Adv.~Op.~Alg., Asterisque \textbf{232} (1995), p. 93 -- 110. 
\bibitem{RM}{\sc A.~Buss,  R.~Meyer:} \emph{Square-integrable coactions
of locally compact quantum groups},  Rep.~Math.Phys. \textbf{63} (2009), no.~2, p. 191 -- 224. 
\bibitem{DC} {\sc K.~de~Commer:} \emph{Galois objects and cocycle
twisting for locally compact quantum groups}, J.~Op.~Theory. \textbf{66} (2011), no.~1, p. 59 -- 106. 
\bibitem{V} {\sc M.~Enock, L.~Vainerman:} \emph{Deformation of a Kac algebra by an abelian subgroup}, Commun.~Math.~Phys. \textbf{178} (1996), no.~3, p. 571 -- 596.
\bibitem{FV}{\sc P.~Fima, L.~Vainerman:} \emph{Twisting and Rieffel's deformation of locally compact quantum groups. Deformation of the Haar measure.} Commun.~Math.~Phys. \textbf{286} (2009), no.~3, p. 1011 -- 1050.
\bibitem{KQ}{\sc S.~Kaliszewski, J.~Quigg:} \emph{Categorical Landstad duality for actions}, Indiana Univ.~Math.~J. \textbf{58} (2009), no.~1, p. 415 -- 441. 
\bibitem{pkrdgc}{\sc P.~Kasprzak:} \emph{Rieffel deformation of group coactions}, Commun.~Math.~Phys. \textbf{300} (2010), no.~3, p. 741 -– 763 
\bibitem{pkrdhs}{\sc P.~Kasprzak:} \emph{Rieffel deformation of homogeneous spaces}, J.~Funct.~Anal. \textbf{260} (2011), no.~1, p. 146 -- 163 
\bibitem{pkrd}{\sc P.~Kasprzak:} \emph{Rieffel deformation via
crossed products}, J.~Funct.~Anal. \textbf{257} (2009), no.~5, p. 1288 -- 1332
\bibitem{pksolEQHS}{\sc P.~Kasprzak, P.M.~So\l tan:} \emph{Embeddable quantum homogeneous spaces}, in preparation.
\bibitem{kl} {\sc A. Kleppner:} \emph{Multipliers on Abelian Groups}, Math.~Annalen \textbf{158} (1965), p. 11 -- 34.
\bibitem{KV} {\sc J.~Kustermans, S.~Vaes:} \emph{Locally compact quantum
groups}, Ann. Sci. Ec. Norm. Sup. \textbf{33} (2000), no.~4, p. 837 --
934.
\bibitem{kusvaes}{\sc J.~Kustermans, S.~Vaes:} \emph{Locally compact quantum groups in the von Neumann algebraic setting}, Math.~Scandin. \textbf{92} (2003), no.~1, p. 68 -- 92.
\bibitem{EL}{\sc E.C.~Lance} \emph{Hilbert $\C^*$-modules. A toolkit for operator algebraists.} Lon.~Math.~Soc.~Lec.~Notes \textbf{210}, Cambridge University Press, Cambridge, 1995.
\bibitem{lan1}{\sc M.B.~Landstad:} \emph{Quantizations arising from abelian subgroups.} Internat.~J.~Math. \textbf{5} (1994), no.~6, p. 897 -- 936.
\bibitem{MNW}{\sc T.~Masuda, Y.~Nakagami, S.L.~Woronowicz:} \emph{A
$\C^*$-algebraic framework for quantum groups}, Internat.~J.~Math. \textbf{ 14} (2003), no.~9, p. 903 -- 1001.
\bibitem{Pod}{\sc P. Podle\'s:} \emph{Quantum spheres}, Lett.~Math.~Phys. \textbf{14} (1987), no.~3, p. 193 -- 202.
\bibitem{Pod1}{\sc P.~Podle\'s:} \emph{Symmetries of Quantum Spaces. Subgroups and Quotient Spaces of Quantum $SU(2)$ and $SO(3)$ Groups.} Commun. Math. Phys. \textbf{170} (1995), no.~1, p.  1 -- 20.
\bibitem{Rf1} {\sc M.A.~Rieffel:} \emph{Deformation quantization for action of
$\mathbb{R}^d$}, Mem.~Am.~Math.~Soc. \textbf{106} (1993), no.~506.
\bibitem{Rf2} {\sc M.A.~Rieffel:} \emph{Non-Compact Quantum Groups
Associated with Abelian Subgroups}, Commun.~in Math.~Phys.
\textbf{171} (1995), no.~1, p. 181 -- 201.
\bibitem{Rf3} {\sc M.A.~Rieffel:} \emph{Proper actions of groups on C*-algebras}, Mappings of operator algebras (Philadelphia 1988), 141-182, Progr. Math. 84, Birkhauser, Boston, MA, (1990). 
\bibitem{SolQC}{\sc P.M.~So\l tan:} \emph{Examples of non-compact quantum group actions}, J.~Math.~Anal.~Appl. \textbf{372} (2010), p. 224 -- 236. 
\bibitem{sun} {\sc V.S.~Sunder:} \emph{An invitation to von Neumann algebras}, Universitext, Springer-Verlag, Berlin-Heidelberg-New York, 1986.
\bibitem{TT}
{\sc M.~Takesaki \& N.~Tatsuuma: } \emph{Duality and subgroups},  Ann.~Math. \textbf{93} (1971) 344 -- 364.
\bibitem{Vaes}{\sc S.~Vaes:} \emph{A new approach to induction and imprimitivity results.} J.~Funct.~An. \textbf{229} (2005), no~2, p. 317 -- 374.
\bibitem{Vaesci}{\sc S.~Vaes:} \emph{The unitary implementation of a locally compact quantum group action.}
J.~Funct.~An. \textbf{180} (2001), p. 426 -- 480 
\bibitem{Wang}{\sc S.~Wang:} \emph{Ergodic Actions of Universal Quantum Groups on Operator Algebras.} Commun.~Math.~Phys. \textbf{203} (1999), p. 481 -- 498 
\bibitem{W4}{\sc S.L.~Woronowicz:} \emph{$\C^*$-algebras generated by unbounded
elements}, Rev.~Math.~Phys. \textbf{7}, no.~3,
(1995), 481 -- 521.
\bibitem{Worcqg}{\sc S.L.~Woronowicz}: \emph{Compact Quantum Groups.} In Symm\'etries quantiques (Les Houches 1995), North-Holland, Amsterdam, 1998, p. 845 -- 884. 
\bibitem{Wormu}{\sc S.L.~Woronowicz}: \emph{From multiplicative unitaries to quantum groups.} International Journal of Mathematics \textbf{7} (1996), no.~1, p. 127 -- 149  
\bibitem{woreq2}{\sc S.L.~Woronowicz}: \emph{Quantum SU(2) and E(2) groups. Contraction procedure.}  Commun. Math. Phys. \textbf{149} 1992), p. 637--652 
\end{thebibliography}
\end{document}